\title{Every $d(d+1)$-connected graph is globally rigid in $\R^d$}
\author{Soma Villányi\thanks{HUN-REN-ELTE Egerv\'ary Research Group
on Combinatorial Optimization,
P\'azm\'any P\'eter s\'et\'any 1/C, 1117 Budapest, Hungary.
e-mail: {\tt villanyi.soma@gmail.com}}}
\theoremstyle{definition}
\newtheorem{theorem}{Theorem}[section]
\newtheorem{lemma}[theorem]{Lemma}
\newtheorem{claim}[theorem]{Claim}
\newcommand{\overleftsmallarrow}{\mathpalette{\overarrowsmall@\leftarrowfill@}}
\newcommand{\overarrowsmall@}[3]{%
  \vbox{%
    \ialign{%
      ##\crcr
      #1{\smaller@style{#2}}\crcr
      \noalign{\nointerlineskip}%
      $\m@th\hfil#2#3\hfil$\crcr
    }%
  }%
}
\def\smaller@style#1{%
  \ifx#1\displaystyle\scriptstyle\else
    \ifx#1\textstyle\scriptstyle\else
      \scriptscriptstyle
    \fi
  \fi
}
\DeclareRobustCommand{\cev}[1]{%
  \mathpalette\do@cev{#1}%
}
\newcommand{\N}{{ N}}
\newcommand{\piny}{\overleftsmallarrow{\hspace{2pt} \pi \hspace{1pt}}}
\newcommand{\R}{\mathbb{R}}
\newcommand{\Mat}{{\mathcal{M}}}
\newcommand{\E}{\mathbb{E}}
\newcommand{\Prob}{\mathbb{P}}
\setlist[enumerate]{itemsep=0mm,parsep=2mm}
\begin{document}
\allowdisplaybreaks 
\maketitle
\vspace{-0.3cm}
\begin{abstract}
Using a probabilistic method, we prove that $d(d+1)$-connected graphs are rigid in $\R^d$, a conjecture of Lov\'asz and Yemini \cite{LY}.
Then, using recent results on weakly globally linked pairs, we modify our argument to prove that $d(d+1)$-connected graphs are globally rigid, %
 too, a conjecture of Connelly, Jord\'an and Whiteley \cite{CJW}.
  The constant $d(d+1)$ is best possible.
\end{abstract}

\section{Introduction}
A well-known theorem of Lovász and Yemini dating back to 1982 states that every 6-connected graph is rigid\footnote{ For definitions see the next section.} in $\R^2$ \cite{LY}. In 2005, Jackson and Jordán   proved that 6-connected graphs are globally rigid, too, in $\R^2$ \cite{JJconnrig}. 
However, for $d\geq 3$, it has been a long-standing open problem whether there exists a constant $c_d$ such that every $c_d$-connected graph is rigid (or globally rigid) in $\R^d$. The rigid version was conjectured by Lovász and Yemini, who \mbox{also stated} that (for $d\geq 2$) the optimal value of $c_d$ could be $d(d+1)$, and constructed  an infinite family of $(d(d+1)-1)$-connected graphs that are not rigid in $\R^d$. 
 The stronger version of this conjecture, which concerns global rigidity,
 was proposed by Connelly, Jordán and \mbox{Whiteley \cite{CJW}}. 
 
 In this paper, we verify these statements. 

\begin{theorem}\label{ddplus1 rigid}
Every $d(d+1)$-connected graph is rigid in $\R^d$.
\end{theorem}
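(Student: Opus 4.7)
My plan is to prove rigidity by a direct rank computation on the rigidity matrix $R(G,p)$ of a generic realization $p$. Since $G$ is rigid in $\R^d$ iff $\mathrm{rank}\,R(G,p) = dn - \binom{d+1}{2}$ for generic $p$, it suffices, after pinning $d+1$ base vertices (killing the trivial rigid motions), to exhibit a non-singular $d(n-d-1) \times d(n-d-1)$ submatrix of $R$. I would attempt to build this submatrix in block form indexed by the remaining vertices, via a uniformly random ordering $\pi$ of $V(G)$: for each vertex $v$ past the first $d+1$ positions, select $d$ edges from $v$ to earlier vertices (back-edges), assembling the rows block-triangularly so that non-singularity reduces to a per-block condition, which genericity of $p$ then reduces to a combinatorial condition on the back-neighbour sets $\Nl(v)$.

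The naive requirement $\degl(v) \ge d$ at every non-initial vertex is too strong for any single ordering, so I would follow the macros $\defic$ and $\capac$ in the paper's preamble and introduce a \emph{deficiency} $\max(d - \degl(v), 0)$ at back-degree-short vertices together with a complementary \emph{capacity} from high-back-degree ones, absorbing the former into the latter via a matroid-union / transportation argument. Under $\kappa(G) \ge d(d+1)$, every vertex has degree $\ge d(d+1)$, so for a uniformly random $\pi$ each neighbour lies before $v$ with probability $\tfrac{1}{2}$, and the expected back-degree is $\ge d(d+1)/2 = \binom{d+1}{2}$: a surplus of $\binom{d}{2}$ per vertex above the threshold $d$. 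Summed over all vertices, this global surplus should dominate the global expected deficiency, and the probabilistic method then furnishes a specific ordering in which the absorption closes.

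The main obstacle is justifying the absorption step: a dense cluster $S \subseteq V$ with small boundary could trap deficiency inside $S$, and no amount of \emph{global} surplus would rescue the argument. This is precisely where \emph{vertex}-connectivity $d(d+1)$, rather than mere minimum degree, becomes critical: via a Menger-type estimate, $d(d+1)$-connectivity should yield the cut condition needed for the transportation to be feasible, i.e.\ an inequality ``the boundary of every $S$ admits enough back-edges to route $S$'s internal deficiency outward''. I expect the technical heart of the proof to lie in the interlock of this Hall-type feasibility condition with the probabilistic averaging, and the fact that $d(d+1)$ is sharp (via Lovász and Yemini's construction of $(d(d+1)-1)$-connected non-rigid graphs) strongly suggests the bound must be used tightly in both halves of the argument.
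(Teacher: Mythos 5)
Your high-level framework --- random vertex ordering $\pi$, greedily building an $\mathcal{R}_d$-independent subgraph $G_\pi$ from back-edges, and applying the probabilistic method to find a good $\pi$ --- is the same one the paper uses, and you correctly identify $\binom{d+1}{2}$ as the key slack. But the mechanism you propose for cashing in that slack has a genuine gap, and it sits exactly where the paper introduces its central new idea.

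In a block-triangular ($0$-extension) construction each vertex can contribute at most $d$ back-edges to the independent set; surplus back-degree at one vertex cannot be routed to repair a deficit elsewhere, because the extra edges would simply be dependent in $\mathcal{R}_d$. The relevant quantity is therefore $\E\big(\min(\deg_{G,\piny}(v),d)\big)$, which, since $\deg_{G,\piny}(v)$ is uniform on $\{0,\dots,\deg_G(v)\}$, equals $d-\tfrac{1}{2}\cdot\tfrac{d(d+1)}{\deg_G(v)+1}$. This is strictly below $d$ even at $\deg_G(v)=d(d+1)$, and summed over $V$ it falls short of $d|V|-\binom{d+1}{2}$ once $|V|$ is large. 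So a pure $0$-extension argument cannot succeed, and no Hall-- or Menger--type feasibility condition on $G$ can rescue it, because the shortfall is intrinsic to the per-vertex cap $d$ rather than a routing obstruction.

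The paper closes this gap with two ideas your proposal is missing. First, a minimal-counterexample reduction: in a minimum counterexample every linked pair is an edge (so rigid induced subgraphs are cliques) and no $N_G(v)$ is a clique; from this one deduces that the maximal cliques of $G[N_G(v)]$ pairwise meet in at most $d-2$ vertices. Second, the $1$-extension: when $\deg_{G,\piny}(v)\ge d+1$ and the back-neighbourhood is not a clique, one picks $d+1$ back-edges including a non-adjacent pair $\{x,y\}$ (which is loose by minimality), realising $F_i$ as a $1$-extension of $F_{i-1}+xy$ and gaining one extra edge beyond $d$. A combinatorial lemma on clique families with pairwise intersection at most $d-2$ then lower-bounds the probability of this event well enough to push $\E(\deg_{G_\pi,\piny}(v))$ up to $d$ vertex by vertex, so no cross-vertex absorption is ever needed and the contradiction with Gluck's bound falls out of linearity of expectation. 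Connectivity $d(d+1)$ is used only to guarantee minimum degree and to ensure that a vertex with clique neighbourhood could be deleted without dropping connectivity; it does not play the transportation role you were anticipating.
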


\begin{theorem}\label{ddplus1 globrigid}
Every $d(d+1)$-connected graph is globally rigid in $\R^d$.
\end{theorem}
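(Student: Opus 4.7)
The plan is to leverage the probabilistic machinery developed for Theorem~\ref{ddplus1 rigid} and upgrade its output from ``generically rigid'' to ``generically globally rigid'' by invoking the recent characterizations of (weakly) globally linked pairs that the abstract alludes to. Since a generic framework is globally rigid if and only if each pair of its vertices is globally linked, my task reduces to showing that in a $d(d+1)$-connected graph $G$, every pair $\{u,v\}$ is globally linked; the weakly-globally-linked-pair results should turn this into a tractable, local combinatorial test that the probabilistic placement used in Theorem~\ref{ddplus1 rigid} can be made to pass.

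Concretely, I would proceed in three steps. First, I would re-run the random-placement construction behind Theorem~\ref{ddplus1 rigid} so that the resulting generic framework $(G,p)$ is infinitesimally rigid with positive probability. Second, I would fix a pair $\{u,v\}$ and exhibit, using the $d(d+1)$-connectivity of $G$, a subgraph $H \subseteq G$ containing both $u$ and $v$ whose structure matches the hypothesis of the weakly-globally-linked-pair criterion; the Menger-type abundance of internally vertex-disjoint $u$--$v$ paths, combined with the rigidity of the ambient framework, should allow me to certify the pair as globally linked. Third, I would handle all $\binom{n}{2}$ pairs at once, either by a union bound (after inflating the success probability of the random placement) or by converting the per-pair certificates into a single equilibrium stress whose rank forces global rigidity via Connelly's stress-matrix theorem.

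The main obstacle will lie in step two: orchestrating the probabilistic argument so that the random generic placement simultaneously witnesses the weakly-globally-linked-pair criterion for every pair. If the criterion requires, for each pair, an independent random event of bounded failure probability, then the union bound will force the construction to fail with probability much smaller than $1/\binom{n}{2}$, which in turn requires sharpening the rank estimates from Theorem~\ref{ddplus1 rigid}. Alternatively, if the criterion can be recast as a single rank condition on a matrix naturally associated with $(G,p)$ (an enlarged rigidity matrix, or a stress matrix), then the task collapses to one probabilistic rank computation at the price of more delicate algebra. In either case, the $d(d+1)$-connectivity will enter through Menger's theorem, whose vertex-disjoint paths provide the combinatorial input that keeps the per-pair certificates affordable.
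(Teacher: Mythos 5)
Your plan diverges from the paper's proof in two fundamental ways, and the second of these is a genuine gap.

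First, the randomness in the argument behind Theorem~\ref{ddplus1 rigid} is not a random placement of $V$ in $\R^d$; it is a uniformly random \emph{ordering} $\pi$ of the vertices. The derived object is a purely combinatorial subgraph $G_\pi$, built by letting each vertex select $\le d+1$ of its ``earlier'' neighbours according to the rules (a)--(c), and the key estimate is $\E(|E_\pi|) \geq d|V|$ under suitable structural hypotheses. Coupled with the fact that $G_\pi$ is always $\mathcal{R}_d$-independent (Lemma~\ref{closedlemma}), this contradicts Gluck's rank bound. No random embedding, union bound, or stress matrix ever appears.

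Second, and more seriously: your reduction ``a generic framework $(G,p)$ is globally rigid iff every pair is globally linked in $(G,p)$'' is correct, but the tool actually available is \emph{weak} global linkedness, which only asserts the existence of \emph{some} generic framework in which the pair is globally linked, with the witnessing framework possibly varying from pair to pair. Establishing that every pair is weakly globally linked in $G$ would therefore \emph{not} imply that $G$ is globally rigid, so step three of your plan does not close. The paper sidesteps this entirely by arguing indirectly: take a minimal counterexample $G$ (fewest vertices, then most edges). By Lemma~\ref{J}, adding a non-adjacent weakly globally linked pair preserves ``is a counterexample,'' so in the extremal $G$ every weakly globally linked pair is already an edge. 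This structural fact, together with the path criterion of Lemma~\ref{wglpath} (which is where Menger-type connectivity enters, as you anticipated), is used to show that distinct maximal cliques in any $G[N_G(v)]$ overlap in at most $d-2$ vertices. That is exactly the hypothesis of the counting lemma (Lemma~\ref{Epilemma}), and the contradiction follows from the independence of $G_\pi$ via the strengthened Lemma~\ref{tlooselemma}. In short, the paper never certifies global linkedness of any pair directly; it exploits extremality so that the weakly-globally-linked criterion becomes a structural constraint feeding the same rank-counting machine as in Theorem~\ref{ddplus1 rigid}.
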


The following partial results were known about these problems. It follows from a theorem of Tanigawa that if every $k$-connected graph is rigid in $\R^d$, then $(k+1)$-connected graphs are globally rigid in $\R^d$. Hence, regarding the existence of some suitable $c_d$, the rigid version of the conjecture was known to be equivalent to the globally rigid version \cite{Tan}.
In some important graph families sufficiently high connectivity  was known to imply rigidity or global rigidity in $\R^3$, e.g., among molecular graphs \cite{Jmol} and body-hinge graphs \cite{JKT}.
 Krivelevich, Lew and Michaeli  proved  recently
that there exists $C > 0$ such that every $(Cd^2
\log^2n)$-connected graph on $n$ vertices is rigid in $\R^d$ \cite{KLM}. 
It was also proved recently, by Clinch, Jackson and Tanigawa,  that if a graph is 12-connected, then it is generically $C_2^1$-rigid, where $C_2^1$ denotes a so-called cofactor matroid, which is conjectured to equal the 3-dimensional rigidity \mbox{matroid \cite{CJT2}}. We will prove a conjecture of Whiteley, which generalizes this result; for more details see the end of the paper.

Theorem \ref{ddplus1 globrigid} is a stronger statement than Theorem \ref{ddplus1 rigid}. Nevertheless, we will 
 first consider Theorem \ref{ddplus1 rigid} separately because its proof only uses  classical results of rigidity theory. On the other hand, in the proof of Theorem \ref{ddplus1 globrigid}, we will rely on some very recent results on {weak global linkedness}, a notion which will be introduced in the next section. 
 
The rest of the paper is organised as follows. In Section \ref{sec:pre}, we introduce the necessary notions and prove a combinatorial lemma. In Section \ref{sec:proof1}, we prove Theorem \ref{ddplus1 rigid}. In Section \ref{sec:proof2}, we slightly alter this argument and prove Theorem \ref{ddplus1 globrigid}. In Section \ref{sec:sharp}, we extend our results by proving that $d(d+1)$-connected graphs are $\binom{d+1}{2}+1$-redundantly rigid and $\binom{d+1}{2}$-redundantly globally rigid in $\R^d$, where the constants $\binom{d+1}{2}+1$ and $\binom{d+1}{2}$ are best possible. The paper ends with some concluding remarks.

\section{Preliminaries}\label{sec:pre}
\subsection{Rigidity and global rigidity}
We briefly introduce the definitions and the results that we need. For a proper introduction to graph rigidity, the reader is directed to, e.g., \cite{GaG} or \cite{WhitM}.
For recent surveys, see \cite{JW,SW}.

A $d$-dimensional {\it framework}
is a pair $(G,p)$, where $G=(V,E)$
is a graph and $p$ is a map from $V$ to $\mathbb{R}^d$. 
Two frameworks
$(G,p)$ and $(G,q)$ are {\it equivalent} if corresponding edge lengths are the same, that is, 
$||p(u)-p(v)||=||q(u)-q(v)||$ holds
for all pairs $u,v$ with $uv\in E$.
The frameworks $(G,p)$ and $(G,q)$ are {\it congruent} if
$||p(u)-p(v)||=||q(u)-q(v)||$ holds
for all pairs $u,v$
with $u,v\in V$.
A $d$-dimensional framework $(G,p)$ is called {\it rigid} if there exists some $\varepsilon
>0$ such that if $(G,q)$ is equivalent to $(G,p)$ and
$||p(v)-q(v)||< \varepsilon$ for all $v\in V$, then $(G,q)$ is
congruent to $(G,p)$. 
The framework $(G,p)$ is called {\it globally rigid} if every equivalent 
$d$-dimensional framework
$(G,q)$ is congruent to $(G,p)$. 

A framework $(G,p)$ is said to be {\it generic} if the set of
its $d|V(G)|$ vertex coordinates is algebraically independent over $\mathbb{Q}$.
It is known that in a given dimension 
the (global) rigidity of a generic framework $(G,p)$ depends only on $G$: either every generic framework of $G$ in $\R^d$ is (globally) rigid, or none of them are  (\cite{AR} , \cite{Con, GHT}).
Thus, we say that a graph $G$ is {\it (globally) rigid} in $\R^d$ if
every (or equivalently, if some) 
generic $d$-dimensional framework of $G$ is (globally) rigid in $\R^d$. It is well-known that if a graph is rigid (resp. globally rigid) in $\R^d$, then it is $d$-connected (resp. $(d+1)$-connected).  If $d=1$, then these conditions are also sufficient for rigidity and global rigidity, respectively. If two complete graphs share at least $d$ vertices, then their union is rigid in $\R^d$.
A graph $G$ is said to be {\it $t$-redundantly (globally) rigid} in $\R^d$ if for every subset $E_0$ of $E(G)$ with $|E_0|< t$, the graph $G-E_0$ is (globally) rigid in $\R^d$.

The \emph{rigidity matrix} of the framework $(G,p)$
is the matrix $R(G,p)$ of size
$|E|\times d|V|$, where, for each edge $uv\in E$, in the row
corresponding to $uv$,
the entries in the $d$ columns corresponding to vertices $u$ and $v$ contain
the $d$ coordinates of
$(p(u)-p(v))$ and $(p(v)-p(u))$, respectively,
and the remaining entries
are zeros. 
The rigidity matrix of $(G,p)$ defines
the \emph{rigidity matroid}  of $(G,p)$ on the ground set $E$
by linear independence of the rows. %
It is known that any pair of generic frameworks
$(G,p)$ and $(G,q)$ have the same rigidity matroid.
We call this the $d$-dimensional \emph{rigidity matroid}
${\cal R}_d(G)=(E,r_d)$ of the graph $G$.

We denote the rank of ${\cal R}_d(G)$ by $r_d(G)$.
A graph $G=(V,E)$ is called \emph{${\cal R}_d$-independent} if 
 $r_d(G)=|E|$.
The pair $\{u,v\}$ is called {\it linked} in $G$ in $\R^d$ if $uv\in E$ or
$r_d(G+uv)=r_d(G)$ holds. If $\{u,v\}$ is not linked, then it is called {\it loose} in $G$. Thus, in an $\mathcal{R}_d$-independent graph $G$ a non-adjacent pair $\{u,v\}$ is loose if and only if $G+uv$ is $\mathcal{R}_d$-independent. Every graph on at most $d+1$ vertices is $\mathcal{R}_d$-independent.

The so-called ($d$-dimensional) {\it Henneberg operations} are two graph operations, which are fundamental in this context.  The {\it 0-extension} operation on vertices $v_1,\dots,v_d$ adds a new vertex $v$ and new edges $vv_1,\dots,vv_d$ to the graph. The {\it 1-extension} operation on an edge $v_1v_2$ and vertices $v_3,\dots,v_{d+1}$ deletes the edge $v_1v_2$ and adds a new vertex $v$ and new edges $vv_1,\dots, vv_{d+1}$ to the graph. These operations are known to preserve $\mathcal{R}_d$-independence.%

The following theorem %
is due to Gluck \cite{Gluck}. 
\begin{theorem}\cite{Gluck}\label{theorem:gluck}
\label{combrigid}
Let $G=(V,E)$ be a graph with $|V|\geq d+1$.
 Then $r_d(G)\leq d|V|-\binom{d+1}{2}$. Furthermore, $G$ is rigid in $\R^d$
if and only if $r_d(G)=d|V|-\binom{d+1}{2}$.
\end{theorem}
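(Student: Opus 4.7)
The plan is to analyze the rigidity matrix $R(G,p)$ of a generic $d$-dimensional framework $(G,p)$, where by definition $r_d(G)=\operatorname{rank} R(G,p)$. The first step is to exhibit a $\binom{d+1}{2}$-dimensional subspace of $\ker R(G,p)$ consisting of infinitesimal rigid motions of $\R^d$: for any skew-symmetric matrix $S\in\R^{d\times d}$ and any translation $t\in\R^d$, the vector field $v\mapsto S\,p(v)+t$ lies in the kernel of $R(G,p)$, since differentiating the constraint $\|p(u)-p(v)\|^2=\text{const}$ along such a motion gives zero. The space of pairs $(S,t)$ has dimension $\binom{d}{2}+d=\binom{d+1}{2}$, and the map sending $(S,t)$ to this vector field is injective whenever the points $p(V)$ affinely span $\R^d$. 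This affine-span condition holds at any generic point because $|V|\geq d+1$. Consequently $r_d(G)\leq d|V|-\binom{d+1}{2}$, giving the first assertion.

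For the rigidity characterisation, the strategy is to invoke the Asimow--Roth equivalence between (continuous) rigidity and infinitesimal rigidity at generic points, and then to reformulate infinitesimal rigidity through the rigidity matrix. Concretely, consider the real algebraic variety $X\subseteq (\R^d)^V$ of frameworks equivalent to $(G,p)$ and the variety $Y\subseteq X$ of frameworks congruent to $(G,p)$. Near any smooth point of $X$ the implicit function theorem identifies the tangent space with $\ker R(G,p)$; the tangent space to $Y$ is the subspace of trivial infinitesimal motions described above, of dimension $\binom{d+1}{2}$. At a generic framework one shows that $p$ is a smooth point of $X$, so local rigidity (the condition $X=Y$ near $p$) is equivalent to equality of tangent spaces, i.e.\ to $\dim\ker R(G,p)=\binom{d+1}{2}$, which by the first part is equivalent to $r_d(G)=d|V|-\binom{d+1}{2}$.

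The main obstacle is the passage from continuous rigidity to infinitesimal rigidity, since the former is an analytic/topological condition and the latter is purely linear-algebraic. The crucial input is smoothness of $X$ at a generic $p$: without genericity these two notions diverge (a framework can be rigid but infinitesimally flexible). The cleanest way to establish smoothness is to note that the function $p\mapsto \operatorname{rank} R(G,p)$ is lower semicontinuous and attains its maximum on a Zariski-open set; since $(G,p)$ is generic, the rank is locally constant near $p$, so $X$ has constant local dimension and the implicit function theorem applies. Once this step is in place, the theorem follows by combining the rank bound of the first paragraph with the tangent-space comparison of the second.
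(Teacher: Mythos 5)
The paper cites this theorem from Gluck \cite{Gluck} without proof, so there is no internal argument to compare against. Your sketch reproduces the standard proof: the upper bound on $r_d(G)$ comes from exhibiting the $\binom{d+1}{2}$-dimensional space of trivial infinitesimal motions (skew matrices plus translations) inside $\ker R(G,p)$, using that generic configurations on at least $d+1$ vertices affinely span $\R^d$ to get injectivity of the map $(S,t)\mapsto(v\mapsto Sp(v)+t)$; the rigidity characterisation is then the Asimow--Roth theorem, which says that at a regular point of the edge-length map rigidity is equivalent to infinitesimal rigidity. Two small points are worth making explicit. First, the step ``$X=Y$ locally $\Leftrightarrow T_pX=T_pY$'' is valid because $Y\subseteq X$ and both are smooth manifolds near $p$, so equality of tangent spaces forces $Y$ to be an open subset of the locally connected manifold $X$ --- equal tangent spaces of two unrelated varieties would not in general give local coincidence. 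Second, the tool that identifies the tangent space of $X$ with $\ker R(G,p)$ is really the constant-rank theorem (you already supply the needed input, namely that lower semicontinuity of rank plus genericity makes the rank locally constant), rather than the implicit function theorem per se. With those clarifications the argument is correct and is the usual route to Gluck's theorem.
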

The first part of Theorem \ref{theorem:gluck} implies the following statement.
\begin{lemma}\label{rankbound}
Let $G=(V,E)$ be a graph. Suppose that $E=E_0\cup E_1\cup \dots \cup E_s$ with $|V(E_i)|\geq d+1$ for $1\leq i\leq s$. Then $r_d(G)\leq |E_0|+\sum_{i=1}^s \left( d|V(E_i)|-\binom{d+1}{2}\right)$.
\end{lemma}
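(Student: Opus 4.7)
The plan is to use the subadditivity of the rank function of the rigidity matroid, together with the first part of Theorem \ref{theorem:gluck}, applied separately to each piece $E_i$ of the decomposition.

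More precisely, the rank function $r_d$ of the rigidity matroid $\mathcal{R}_d(G)$ is a matroid rank function on $E$, hence it is submodular and in particular subadditive. Since $E = E_0 \cup E_1 \cup \dots \cup E_s$, this gives
\[
r_d(E) \;\le\; r_d(E_0) + \sum_{i=1}^s r_d(E_i).
\]
The first step is just to note this inequality; the remaining steps bound each term on the right-hand side.

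For the term $r_d(E_0)$ I would use the trivial bound $r_d(E_0) \le |E_0|$, valid since rank cannot exceed cardinality. For each $i \ge 1$, the rank $r_d(E_i)$ equals the rank of the rigidity matroid of the subgraph $(V(E_i), E_i)$ (isolated vertices outside $V(E_i)$ contribute only zero columns to the rigidity matrix and do not affect the row rank). Since $|V(E_i)| \ge d+1$ by hypothesis, the first part of Theorem \ref{theorem:gluck} applies and yields
\[
r_d(E_i) \;\le\; d|V(E_i)| - \binom{d+1}{2}.
\]
Substituting these bounds into the subadditivity inequality gives the claimed estimate.

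The only thing to be a little careful about is the identification of $r_d$ applied to a subset of edges with the rigidity-matroid rank of the subgraph spanned by those edges; but this is essentially by definition of the rigidity matroid from the rigidity matrix, so there is no real obstacle here. The proof is a short, direct application of submodularity plus Gluck's bound.
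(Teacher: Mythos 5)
Your proof is correct and is exactly the argument the paper intends: the paper states Lemma~\ref{rankbound} as an immediate consequence of the first part of Theorem~\ref{theorem:gluck} without spelling it out, and the implicit argument is precisely the subadditivity of the matroid rank function together with Gluck's bound on each $E_i$ and the trivial bound $r_d(E_0)\le|E_0|$.
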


At this point let us recall from \cite{LY} how $k$-connected non-rigid graphs can be constructed in $\R^d$ when $k=d(d+1)-1$ and $d\geq 2$. Let $G_0=(V_0,E_0)$ be a $k$-regular $k$-connected graph on $s$ vertices. Split every vertex
of $G_0$ into  $k$ vertices of degree 1 and then, for each $v\in V_0$, add a complete graph $G_v$ on the $k$ vertices that represent $v$. An example with $k=5$, $d=2$ and $s=8$ is shown in Figure \ref{fig:nonstiff}. It is easy to see that the resulting graph $G=(V,E)$ is $k$-connected.
  Furthermore, $E=E_0\cup \bigcup_{v\in V_0}E(G_v)$ and hence by Lemma \ref{rankbound}

 \small
  \begin{align*}
  r_d(G)\leq
   \frac{ks}{2} + s\left(dk-\binom{d+1}{2}\right)=
  \left(d+\frac{1}{2}-\frac{1}{2}\cdot\frac{d(d+1)}{k}\right)ks=
  \left(d+\frac{1}{2}-\frac{1}{2}\cdot\frac{d(d+1)}{k}\right)|V(G)|.
  \end{align*}
  \normalsize

  If $s$ is large enough, then the right-hand side is less than $d|V(G)|-\binom{d+1}{2}$ and hence $G$ is not rigid in $\R^d$ by Theorem \ref{theorem:gluck}.

 \begin{figure}[b!]
\begin{center}
\includegraphics[scale=0.5]{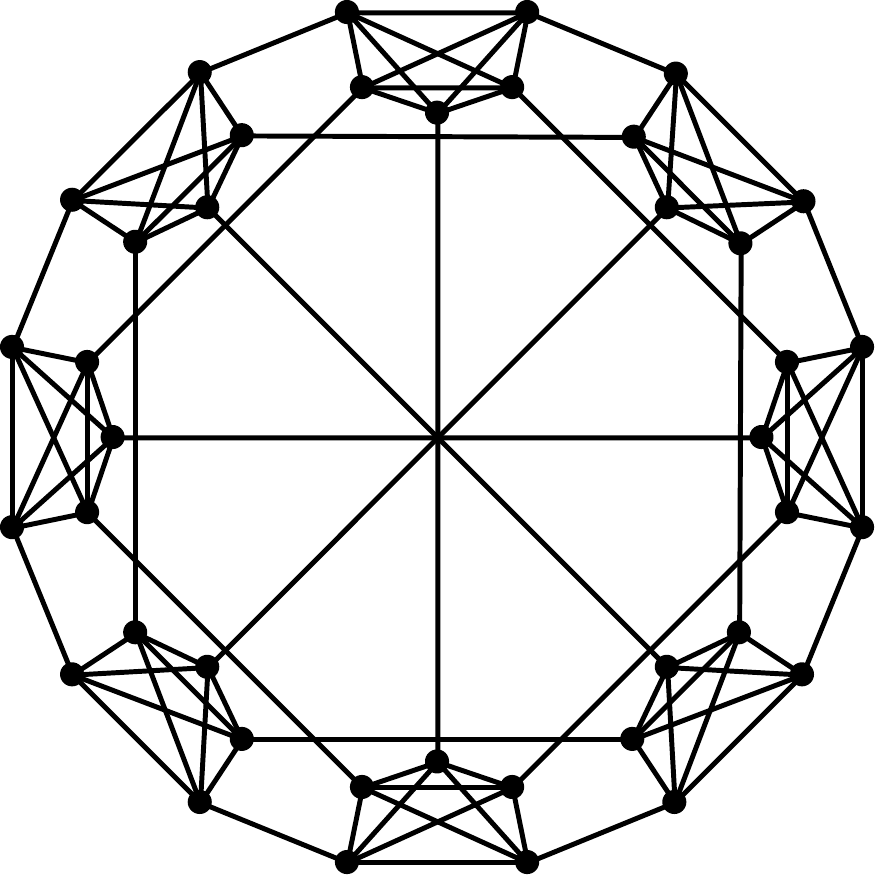}
\end{center}
\caption{A 5-connected graph which is not rigid in $\R^2$.}
\label{fig:nonstiff}
\end{figure}

\newpage
\subsection{Weakly globally linked pairs}
We will need the following notion in the proof of Theorem \ref{ddplus1 globrigid}. 

A pair $\{u,v\}$ of vertices  in a framework $(G,p)$ is called {\it globally linked in $(G,p)$} if for every equivalent framework $(G,q)$ we have
$||p(u)-p(v)||=||q(u)-q(v)||$. Global linkedness in $\R^d$ is not a generic property (for $d\geq 2$): 
a vertex pair may be globally linked in some generic $d$-dimensional frameworks of $G$ without being globally linked in all generic frameworks. 
We say that a pair $\{u,v\}$ is {\it weakly globally linked in $G$} in $\R^d$ if there exists a generic $d$-dimensional framework $(G,p)$
in which $\{u,v\}$ is globally linked. If  $\{u,v\}$ is not weakly globally linked in $G$ in $\R^d$, then it is called {\it globally loose} in $G$ in $\R^d$. If a vertex pair is loose in $G$, then it is also globally loose.
The following lemmas were proved in \cite{wgl}.
\begin{lemma}\cite{wgl}\label{J}
Let $G=(V,E)$ be a graph. Suppose that $\{u,v\}$ is a non-adjacent weakly globally linked vertex pair in $G$ in $\R^d$. Then $G$ is globally rigid in $\R^d$ if and only if $G+uv$ is globally rigid in $\R^d$.
\end{lemma}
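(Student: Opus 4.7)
My plan is to split the biconditional into two directions. For the forward direction, suppose $G$ is globally rigid in $\R^d$ and fix any generic $p$; I want to show that $(G+uv, p)$ is globally rigid. The genericity of $p$ is a property of its coordinates, not of the edge set, so $p$ remains generic for $G+uv$. Given any framework $(G+uv, q)$ equivalent to $(G+uv, p)$, forgetting the edge $uv$ gives a framework $(G, q)$ equivalent to $(G, p)$, which must be congruent by global rigidity of $G$. Hence $(G+uv, q)$ is congruent to $(G+uv, p)$, and the implication follows.

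The interesting direction uses the weak-global-linkedness hypothesis. Suppose $G+uv$ is globally rigid in $\R^d$. By definition of weak global linkedness, there exists a generic $p$ such that $\{u,v\}$ is globally linked in $(G, p)$. Since this same $p$ is also generic for $G+uv$, the framework $(G+uv, p)$ is globally rigid. Now take any framework $(G, q)$ equivalent to $(G, p)$: global linkedness of $\{u,v\}$ at $p$ yields $\|p(u) - p(v)\| = \|q(u) - q(v)\|$, so $(G+uv, q)$ is equivalent to $(G+uv, p)$. Global rigidity of $(G+uv, p)$ then forces $(G+uv, q)$ to be congruent to $(G+uv, p)$, and forgetting the edge $uv$ shows that $(G, q)$ is congruent to $(G, p)$. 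Thus the generic framework $(G, p)$ is globally rigid, and since global rigidity is a generic property, $G$ is globally rigid in $\R^d$.

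I do not expect a serious obstacle, but one point that should be made explicit is the interplay between the two flavours of genericity involved. Weak global linkedness only promises the existence of one generic $p$ witnessing global linkedness of $\{u,v\}$, as opposed to a property holding at every generic framework; but this single $p$ must simultaneously be generic for $G+uv$ (to invoke global rigidity of $G+uv$) and generic for $G$ (to conclude global rigidity of $G$ from global rigidity of $(G,p)$). Both of these come for free, because algebraic independence of the coordinates of $p$ does not depend on the underlying graph, so the hypothesis of weak global linkedness is exactly the bridge that allows one to transfer global rigidity from $G+uv$ back to $G$.
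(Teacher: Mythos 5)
Your proof is correct and complete. The paper itself does not give a proof of Lemma~\ref{J}: it is stated as a citation to \cite{wgl}, so there is no in-paper argument to compare against. Your two directions are exactly right: the forward implication is the standard fact that adding an edge preserves global rigidity, and the backward implication correctly combines the definition of weak global linkedness (which supplies a single generic $p$ at which $\{u,v\}$ is globally linked) with the Gortler--Healy--Thurston theorem that global rigidity is a generic property, using the globally rigid framework $(G+uv,p)$ as a pivot. Your closing remark on the ``two flavours of genericity'' is the right thing to flag, and you resolve it correctly: genericity of $p$ is a property of the coordinate set alone and does not depend on the edge set, so the same $p$ serves for both $G$ and $G+uv$.
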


\begin{lemma}\cite{wgl}\label{wglpath} %
Let $G=(V,E)$ be a graph. Suppose that $V_0\subset V$ with $u,v\in V_0$, $\{u,v\}$ is linked in $G[V_0]$ in $\R^d$ and there exists a $u$-$v$-path in $G$ that is internally disjoint from $V_0$. Then $\{u,v\}$ is weakly globally linked in $G$ in $\R^d$.
\end{lemma}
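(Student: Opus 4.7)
The plan is to exhibit a single generic framework $(G,p)$ in which $\{u,v\}$ is globally linked, which will show by definition that $\{u,v\}$ is weakly globally linked in $G$. Denote the given path by $P = u\,w_1w_2\cdots w_m\,v$, with $w_1,\dots,w_m \notin V_0$, and set $w_0 := u$ and $w_{m+1} := v$. The strategy is to use $G[V_0]$ to cut the set of possible values of $\|q(u)-q(v)\|$ down to a finite set, and then to use $P$ to pick out one of them via an upper-bound argument.

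I would first fix a generic realisation $p_0: V_0 \to \R^d$ of $G[V_0]$. Because $\{u,v\}$ is linked in $G[V_0]$, adding $uv$ does not change the rigidity matroid rank, so $\|u-v\|^2$ is algebraic over the squared edge lengths on the realisation space of $G[V_0]$. Consequently, $\|q(u)-q(v)\|$ takes only finitely many distinct values $d_1 < d_2 < \cdots < d_k$ over the frameworks $(G[V_0],q)$ equivalent to $(G[V_0],p_0)$. The fibre of the edge-length map through $p_0$ decomposes into irreducible components on each of which $\|u-v\|$ is constant. A transcendence-degree count (the $r_d(G[V_0])$ generic edge lengths together with the $d|V_0|-r_d(G[V_0])$ coordinates generic on a chosen component provide $d|V_0|$ independent transcendentals over $\mathbb{Q}$) shows that a generic point of the component realising the smallest value $d_1$ is still generic as a configuration of $V_0$; I replace $p_0$ by such a point and henceforth assume $\|p_0(u)-p_0(v)\| = d_1$.

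Next, I extend $p_0$ to a map $p: V \to \R^d$ by placing each internal path vertex $w_i$ in a small open neighbourhood of the point $\tfrac{m+1-i}{m+1}\,p_0(u) + \tfrac{i}{m+1}\,p_0(v)$, choosing its coordinates algebraically independent over all previously chosen coordinates, and placing any remaining vertices of $V$ at arbitrary generic positions. By shrinking the neighbourhoods, I can guarantee
\[
\ell \;:=\; \sum_{i=0}^{m} \|p(w_i) - p(w_{i+1})\| \;<\; d_2,
\]
while still $\ell \geq \|p(u)-p(v)\| = d_1$. For any equivalent framework $(G,q)$, the restriction $(G[V_0], q|_{V_0})$ is equivalent to $(G[V_0], p_0)$, so $\|q(u)-q(v)\| \in \{d_1, \dots, d_k\}$; at the same time, the path edges have identical lengths under $p$ and $q$, so the triangle inequality along $P$ gives $\|q(u)-q(v)\| \leq \ell < d_2$. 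These two conclusions together force $\|q(u)-q(v)\| = d_1 = \|p(u)-p(v)\|$, proving that $\{u,v\}$ is globally linked in $(G,p)$.

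The main obstacle I expect is the genericity argument in the second step: one must verify carefully that a generic point of the fibre component realising the minimum value $d_1$ really yields a configuration with algebraically independent coordinates over $\mathbb{Q}$, even when the originally chosen generic realisation of $G[V_0]$ lies on a different component of the fibre. Once this is in place, the rest of the proof reduces to the triangle-inequality estimate above, together with the standard observation that algebraic independence of the path vertices' coordinates is preserved when they are constrained to open neighbourhoods of the $u$-$v$ segment.
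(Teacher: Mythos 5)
Your plan is sound and, as far as I can judge, gives a correct proof. To be clear about the comparison: the paper itself does not prove Lemma~\ref{wglpath}; it cites it from~\cite{wgl}. So I am evaluating your argument on its own merits rather than against a proof printed here.

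The key ideas you use — (i) linkedness of $\{u,v\}$ in $G[V_0]$ forces $\|q(u)-q(v)\|$ to lie in a finite set $\{d_1<\dots<d_k\}$ determined by the edge lengths of $G[V_0]$, and (ii) a short $u$--$v$ path through fresh vertices supplies, via the triangle inequality, an upper bound $\ell$ with $d_1\leq \ell< d_2$, which forces the distance to equal $d_1$ in every equivalent framework — are exactly the right combination, and they do yield global linkedness in the constructed generic $(G,p)$. (You should explicitly note the degenerate case $k=1$, where the pair is already globally linked in $G[V_0]$ and the path is unnecessary.)

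The one place that needs more care is the step you yourself flag: replacing $p_0$ by a \emph{generic} configuration of $V_0$ realising the minimum $d_1$. Your transcendence count is correct once properly set up, and here is how to make it airtight. The semialgebraic set
\[
M=\bigl\{q\in\R^{d|V_0|}:\ \|q'(u)-q'(v)\|\geq\|q(u)-q(v)\|\ \text{for every $q'$ with the same $G[V_0]$-edge lengths as $q$}\bigr\}
\]
projects onto the full $r$-dimensional image of the edge-length map, and over each image point its fibre contains an entire connected component of the fibre of the edge-length map (since the distance is locally constant there), which for a generic image point has dimension $d|V_0|-r$. Hence $\dim M=d|V_0|$, so $M$ has nonempty interior and therefore contains a configuration with algebraically independent coordinates. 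Equivalently, and in the spirit of your phrasing: if $p_1$ is generic on that fibre component over the field $\mathbb{Q}(y_0)$, then
\[
\operatorname{tr.deg}_{\mathbb{Q}}\mathbb{Q}(p_1)=\operatorname{tr.deg}_{\mathbb{Q}(y_0)}\mathbb{Q}(p_1)+\operatorname{tr.deg}_{\mathbb{Q}}\mathbb{Q}(y_0)=(d|V_0|-r)+r=d|V_0|,
\]
using that $y_0=\phi(p_1)\in\mathbb{Q}(p_1)$; the hidden point you should state is that over the generic edge-length vector $y_0$ \emph{every} component of the fibre has the expected dimension $d|V_0|-r$ (a standard upper-semicontinuity/dominance fact), so that the component containing the minimising configuration is not a lower-dimensional stratum. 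With these two remarks made explicit, the argument is complete, and the rest (placing the path vertices generically in small neighbourhoods of the segment so that $\ell<d_2$, the triangle-inequality step, and the monotonicity of global linkedness under adding edges of $G$ outside $V_0\cup V(P)$) is routine and correct.
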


\subsection{A combinatorial lemma}
For $n\in \mathbb{N}$, the set $\{1,\dots,n\}$ is denoted by $[n]$. For a set $H$, $\binom{H}{n}=\{S\subseteq H: |S|=n\}$.

\begin{lemma}\label{comblemma}
Let $n,r,d,m\in \mathbb{N}$ with $2\leq d$ and  $d+1\leq m\leq n-1$. 
Suppose that $H_1,\dots, H_r\subsetneq [n]$ such that %
 $H_{j_1}\neq H_{j_2}$ and  $|H_{j_1}\cap H_{j_2}|\leq d-2$ for every $1\leq j_1<j_2\leq r$. Then $$\left|\left\{S\in \binom{[n]}{m} :\exists j\in \{1,\dots,r\}, S\subseteq H_j\right\}\right|\leq \binom{n-1}{m}. $$
\begin{proof}
For $l\in \{m,m-1\}$, let  $\mathcal{S}_l=\left\{S\in \binom{[n]}{l} :\exists j\in \{1,\dots,r\}, S\subseteq H_j\right\}$. Let $|\mathcal{S}_l|$ be denoted by $A_l$. Suppose, for a contradiction, that $A_{m}>\binom{n-1}{m}$.  A counting argument shows that

 $$A_{m-1}((n-1)-(m-1))\geq \big|\big\{(S,S')\in \mathcal{S}_{m-1}\times S_m: S\subseteq S'\big\}\big| = A_{m}m.$$

\noindent It follows that  $$A_{m-1}\geq A_{m}\frac{m}{n-m}> \binom{n-1}{m}\frac{m}{n-m}=\binom{n-1}{m-1}.$$

\noindent For each $S\in \mathcal{S}_{m-1}$ %
let $j_S$ denote the index for which $S\subseteq H_{j_S}$ and choose an arbitrary element $u_S\in [n]-H_{j_S}$. Since $m-1> d-2$, there is no $j$ with $S\cup \{u_S\}\subseteq H_{j}$. Hence, 
$$f:S\mapsto S\cup \{u_S\} \text{ is a function from } \mathcal{S}_{m-1}  \text{ into } \binom{[n]}{m}-\mathcal{S}_m.$$ 
Suppose that $S,S'\in \mathcal{S}_{m-1}$ and $S'\cup \{u_{S'}\}=S\cup \{u_S\}$. Then $|S'\cap S|\geq m-2> d-2$. Therefore, we must have $H_{j_{S'}}=H_{j_{S}}$ and also $S'=S$.
It follows that $f$ is an injection and $A_{m-1}\leq \binom{n}{m}-A_{m}.$ Hence,

 $$A_{m}\leq \binom{n}{m}-A_{m-1} < \binom{n}{m}-\binom{n-1}{m-1} = \binom{n-1}{m} ,$$ a contradiction.
\end{proof}
   
\end{lemma}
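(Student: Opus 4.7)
The plan is to prove the bound by a double-counting argument combined with a contradictory injection, exactly the sort of approach suggested by the appearance of $\binom{n-1}{m} = \binom{n}{m} - \binom{n-1}{m-1}$ on the right-hand side. Let $A_m$ and $A_{m-1}$ denote the sizes of the analogous families $\mathcal{S}_m$ and $\mathcal{S}_{m-1}$ of $m$- and $(m-1)$-subsets contained in some $H_j$. The crucial structural fact to keep in mind throughout is that, since $m-1 \geq d > d-2$, any $(m-1)$- or $m$-subset of $[n]$ can be contained in \emph{at most one} $H_j$; otherwise two distinct $H_j$'s would intersect in more than $d-2$ elements.

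First I would assume, for a contradiction, that $A_m > \binom{n-1}{m}$. A standard incidence count on pairs $(S,S')$ with $S \in \mathcal{S}_{m-1}$, $S' \in \mathcal{S}_m$, and $S \subset S'$—where the lower bound uses that every $m$-set in $\mathcal{S}_m$ contains $m$ many $(m-1)$-subsets, each automatically in $\mathcal{S}_{m-1}$, and the upper bound uses that each $(m-1)$-subset of $[n]$ is covered by at most $n-m$ many $m$-subsets of $[n]$—yields
\[
A_{m-1}(n-m) \;\geq\; m\, A_m,
\]
and hence $A_{m-1} > \binom{n-1}{m-1}$.

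Next I would construct an injection $f \colon \mathcal{S}_{m-1} \to \binom{[n]}{m} \setminus \mathcal{S}_m$. For each $S \in \mathcal{S}_{m-1}$, let $j_S$ be the unique index with $S \subseteq H_{j_S}$, and choose some $u_S \in [n] \setminus H_{j_S}$, which exists because $H_{j_S} \subsetneq [n]$; set $f(S) = S \cup \{u_S\}$. That $f(S) \notin \mathcal{S}_m$ is immediate from the uniqueness observation above: if $f(S) \subseteq H_j$ for some $j$, then $S \subseteq H_j \cap H_{j_S}$ forces $j = j_S$, contradicting $u_S \notin H_{j_S}$. Combined with $A_{m-1} > \binom{n-1}{m-1}$, injectivity then gives
\[
A_m \;\leq\; \binom{n}{m} - A_{m-1} \;<\; \binom{n}{m} - \binom{n-1}{m-1} \;=\; \binom{n-1}{m},
\]
contradicting the assumption.

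The one place where care is needed—and the only step where the full strength of $m \geq d+1$ enters rather than just $m \geq d$—is verifying injectivity of $f$. If $f(S) = f(S')$ then $|S \cap S'| \geq m-2 \geq d-1 > d-2$, so $S$ and $S'$ lie in a common $H_j$, and by uniqueness $j_S = j_{S'}$; the equal images then force $u_S = u_{S'}$ and hence $S = S'$. This is the main (and essentially only) subtlety; everything else is routine.
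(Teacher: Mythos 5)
Your proposal is correct and follows essentially the same path as the paper's proof: assume $A_m > \binom{n-1}{m}$, double-count incidences to get $A_{m-1} > \binom{n-1}{m-1}$, build the injection $S \mapsto S \cup \{u_S\}$ into $\binom{[n]}{m}\setminus\mathcal{S}_m$, and derive the contradiction. The uniqueness observation and the injectivity check are identical in substance to the paper's.
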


\section{Proof of Theorem \ref{ddplus1 rigid}}\label{sec:proof1}
Fix $d\geq 2$. Let $G=(V,E)$ be a graph and $\pi =(v_1,\dots,v_n)$ an ordering of the vertices of $G$. Let $\N_{G,\piny}(v_i)=\{u\in \N_G(v_i)$: $u$ precedes $v_i$ in $\pi\}$  %
and let $\deg_{G,\piny}(v_i)$ denote $|\N_{G,\piny}(v_i)|$.%

 Construct  a subgraph  $G_\pi=(V,E_\pi)$ of $G$ according to the following rules. %
\begin{enumerate}[(a)]
\item If $\deg_{G,\piny}(v_i)\leq d$, then in $G_\pi$ connect $v_i$ with every vertex of $\N_{G,\piny}(v_i)$.
\item If $\deg_{G,\piny}(v_i)\geq d+1$ and $\N_{G,\piny}(v_i)$ induces a clique in $G$, then in $G_\pi$ connect $v_i$ with $d$ vertices of $\N_{G,\piny}(v_i)$.
\item If $\deg_{G,\piny}(v_i)\geq d+1$ and $\N_{G,\piny}(v_i)$ does not induce a clique in $G$, then in $G_\pi$  connect $v_i$ with $d+1$ vertices of $\N_{G,\piny}(v_i)$, including two vertices $x$ and $y$ that are not adjacent in $G$.
\end{enumerate}

\begin{lemma}\label{closedlemma}
Let $G=(V,E)$ be a graph and $\pi =(v_1,\dots,v_n)$ an ordering of the vertices of $G$. %
Suppose that every linked pair is connected in $G$. Then $G_\pi$ is $\mathcal{R}_d$-independent.
\end{lemma}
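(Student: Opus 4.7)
My plan is to prove the lemma by induction on $i$, showing that for each $i\in\{0,1,\dots,n\}$ the subgraph $G_\pi^i$ of $G_\pi$ induced by $\{v_1,\dots,v_i\}$ is $\mathcal{R}_d$-independent. The base case is trivial since any graph on at most $d+1$ vertices is $\mathcal{R}_d$-independent. For the inductive step, $G_\pi^i$ is obtained from $G_\pi^{i-1}$ by adding the vertex $v_i$ together with the edges prescribed by whichever of the rules (a), (b), (c) applies to $v_i$. I will verify that each rule corresponds to an operation that preserves $\mathcal{R}_d$-independence.

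In cases (a) and (b), the new vertex $v_i$ is joined to at most $d$ predecessors. When exactly $d$ edges are added this is a $d$-dimensional $0$-extension; when fewer, essentially the same rigidity-matrix argument applies, since the new rows are supported on the $d$ new columns corresponding to $v_i$ (which are zero in every previous row) and are themselves linearly independent at a generic realization. Hence in both subcases $G_\pi^i$ remains $\mathcal{R}_d$-independent.

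The main difficulty is case (c), where $v_i$ gets $d+1$ neighbors in $G_\pi^i$ including two vertices $x,y$ that are non-adjacent in $G$. The key claim is that $\{x,y\}$ is loose in $G_\pi^{i-1}$, i.e.\ $G_\pi^{i-1}+xy$ is $\mathcal{R}_d$-independent. Once this is established, applying a $d$-dimensional $1$-extension to $G_\pi^{i-1}+xy$ at the edge $xy$ using the remaining $d-1$ neighbors of $v_i$ chosen by rule (c) produces exactly $G_\pi^i$; since $1$-extensions preserve $\mathcal{R}_d$-independence, the inductive step follows.

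To prove the claim I use the hypothesis of the lemma. Because $xy\notin E(G)$, the pair $\{x,y\}$ is not linked in $G$, so $r_d(G+xy)=r_d(G)+1$. Fixing a generic realization $p$ of $G$ on $V$, this means that the $xy$-row of the rigidity matrix lies outside the row space of the rows indexed by $E(G)$. Since $E(G_\pi^{i-1})\subseteq E(G)$, that row is \emph{a fortiori} outside the row space of $E(G_\pi^{i-1})$, and combined with the inductive $\mathcal{R}_d$-independence of $G_\pi^{i-1}$ this yields the desired $\mathcal{R}_d$-independence of $G_\pi^{i-1}+xy$. This monotonicity step, leveraging the global hypothesis to obtain a local looseness statement in $G_\pi^{i-1}$, is the part of the argument where the assumption is used in an essential way.
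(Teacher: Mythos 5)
Your proof is correct and takes essentially the same approach as the paper: induction on $i$, with rules (a) and (b) treated as (subgraphs of) 0-extensions, rule (c) as a 1-extension of $F_{i-1}+xy$, and the key step that $\{x,y\}$ being loose in $G$ (since $xy\notin E$ and every linked pair is adjacent) implies $\{x,y\}$ is loose in the subgraph $F_{i-1}$, which the paper states as a one-liner and you justify explicitly at the level of rigidity-matrix rows. One small wording issue in case (a)/(b): the new rows are not supported only on the $v_i$-columns (they also have nonzero entries at the other endpoints), but since the $v_i$-columns are zero in all prior rows, your rank argument is still valid.
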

\begin{proof}
We prove that $F_i=G_\pi[\{v_1,\dots, v_i\}]$ is $\mathcal{R}_d$-independent by induction on $i$. For $i\leq d$, we have $|V(F_i)|\leq d$, which implies that $F_i$ is $\mathcal{R}_d$-independent. Suppose that $d+1\leq i\leq n$.
If (a) or (b) holds, then $F_i$
is a subgraph of a 0-extension of $F_{i-1}$, and thus $F_{i}$ is $\mathcal{R}_d$-independent.
Suppose that (c) holds. %
As $xy\notin E$, $\{x,y\}$ is loose in $G$ and hence in $F_{i-1}$. It follows that $F_{i-1}+xy$ is $\mathcal{R}_d$-independent. $F_{i}$ is a 1-extension of $F_{i-1}+xy$, and thus $F_{i}$ is $\mathcal{R}_d$-independent. 

We have $F_n=G_\pi$, which completes the proof.
\end{proof}

\begin{lemma}\label{Epilemma}
Let $G=(V,E)$ be a graph. Suppose that for each $v\in V$, $\deg_G(v)\geq d(d+1)$, $N_G(v)$ does not induce a clique in $G$, and if $H_1$ and $H_2$ are the vertex sets of two different maximal cliques of $G[N_G(v)]$, then $|H_1\cap H_2|\leq d-2$.  Let $\pi$ be a uniformly random ordering of $V$.
 Then $$\E(|E_\pi|)\geq d|V|.$$
\end{lemma}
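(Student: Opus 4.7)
The plan is to show that $\E(c(v))\geq d$ for every vertex $v$, where $c(v)$ denotes the contribution of $v$ to $|E_\pi|$ (i.e., the number of edges added when $v$ is processed under the rules (a)--(c)); since $|E_\pi|=\sum_{v\in V}c(v)$, summing and applying linearity of expectation will give $\E(|E_\pi|)\geq d|V|$.

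Fix $v\in V$ with $k:=\deg_G(v)\geq d(d+1)$. Restricting attention to the uniform relative order of $v$ and its $k$ neighbours, a standard symmetry argument shows that $\degl_G(v)$ is uniformly distributed on $\{0,1,\dots,k\}$ and, conditional on $\degl_G(v)=m$, $\Nl_G(v)$ is a uniformly random $m$-subset of $N_G(v)$. The definition of $G_\pi$ then gives $c(v)=m$ for $m\leq d$, $c(v)=d$ when $m\geq d+1$ and $\Nl_G(v)$ induces a clique in $G$, and $c(v)=d+1$ otherwise.

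The key input is a bound on the conditional probability that $\Nl_G(v)$ induces a clique when $m\geq d+1$. The vertex sets $H_1,\dots,H_r$ of the maximal cliques of $G[N_G(v)]$ are by hypothesis distinct proper subsets of $N_G(v)$ with pairwise intersection at most $d-2$, so Lemma \ref{comblemma} applies (with $n=k$) for $d+1\leq m\leq k-1$ and bounds the number of clique-inducing $m$-subsets by $\binom{k-1}{m}$, yielding a clique probability of at most $\binom{k-1}{m}/\binom{k}{m}=(k-m)/k$; the boundary case $m=k$ is handled directly, since $N_G(v)$ itself is not a clique. Hence $\E(c(v)\mid \degl_G(v)=m)\geq d+m/k$ for every $m\geq d+1$.

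Averaging with the uniform weights $1/(k+1)$ produces
\[\E(c(v))\geq \frac{1}{k+1}\left(\sum_{m=0}^d m+\sum_{m=d+1}^k\Bigl(d+\frac{m}{k}\Bigr)\right),\]
and evaluating these two sums reduces the desired inequality $\E(c(v))\geq d$ to $k\geq d(d+1)$, which is precisely the hypothesis; at $k=d(d+1)$ the reduction is an equality. I do not expect any real obstacle beyond this final algebraic manipulation. The combinatorial substance is fully concentrated in Lemma \ref{comblemma}, whose calibration (with the parameter $d-2$ in the intersection bound) is exactly what causes the threshold to come out to $d(d+1)$, matching the lower-bound construction from the introduction.
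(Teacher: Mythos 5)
Your proposal is correct and follows essentially the same route as the paper: you bound the per-vertex contribution $\E(\deg_{G_\pi,\piny}(v))\ge d$ using the uniformity of the position of $v$ among its neighbours, the uniformity of the preceding neighbour set conditional on its size, and Lemma \ref{comblemma} to bound the clique probability by $(k-m)/k$, exactly as the paper does (with $A_k=0$ handling the boundary case). The only cosmetic difference is that you condition directly on $\degl_G(v)=m$ and bound $\E(c(v)\mid m)\geq d+m/k$, whereas the paper splits the contribution as $\min(\degl(v),d)+\mathbb{1}_Q$ and estimates the two expectations separately; both reorganizations of the sum reduce the inequality to $k\geq d(d+1)$ with equality at the threshold.
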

\begin{proof}
Fix $v\in V$ and let $k$ denote $\deg_G(v)$. For $0\leq i\leq k$, we have $\Prob\big(\deg_{G,\piny}(v)=i\big)\;=\;\frac{1}{k+1}$. Hence,
\begingroup
\addtolength{\jot}{0.2em}
\begin{align}
\begin{split}
\E(\min(\deg_{G,\piny}(v),d))&=\sum_{i=0}^{k}\frac{1}{k+1}\min(i,d)\hspace{4cm}\\
&=\frac{1}{k+1}\left(dk- \frac{d(d-1)}{2}\right)\\
&= d-\frac{1}{2}\cdot\frac{d(d+1)}{k+1}. \\
\end{split}
\end{align}
\endgroup

 \noindent Let $H_1,\dots, H_r$ denote the vertex sets of the maximal cliques of $G[N_G(v)]$.
For $d+1\leq i\leq k$, let
\small
 $$\mathcal{S}_i=\left\{S\in \binom{N_G(v)}{i}: S \text{ induces a clique in } G\right\} =\left\{S\in \binom{N_G(v)}{i}:\exists j\in\{1,\dots,r\}, S\subseteq H_j\right\}.$$
 \normalsize
 Let $A_i$ denote $|\mathcal{S}_i|.$
 Then $A_{k}=0$ and, by Lemma \ref{comblemma}, $A_i\leq \binom{k-1}{i}$ for $d+1\leq i\leq k-1$. 
  Let $Q$ denote the event that $\deg_{G,\piny}(v)\geq d+1$ and $\N_{G,\piny}(v)$ does not induce a clique in $G$. If $\deg_{G,\piny}(v)=i\geq d+1$, then  $Q$ occurs if and only if $\N_{G,\piny}(v)\notin \mathcal{S}_{i}$.
Hence,

\begingroup
\addtolength{\jot}{0.2em}
\begin{align*}
 \Prob(Q|\deg_{G,\piny}(v)=i)&=1-\frac{A_i}{\binom{k}{i}} \geq 1-\frac{\binom{k-1}{i}}{{\binom{k}{i}}}=1- \frac{k-i}{k}=\frac{i}{k}.\hspace{2cm}
 \end{align*}
It follows that
\begin{align}
\begin{split}
\Prob(Q)&=\sum_{i=d+1}^{k} \Prob(\deg_{G,\piny}(v)=i)\Prob(Q|\deg_{G,\piny}(v)=i)\hspace{-0.3cm}\\
&\geq \sum_{i=d+1}^{k} \frac{1}{k+1}\cdot\frac{i}{k}\\
&=\frac{1}{2}\cdot\frac{(k+d+1)(k-d)}{k(k+1)}\\
&=\frac{1}{2} - \frac{1}{2}\cdot \frac{d(d+1)}{k(k+1)}.
\end{split}
\end{align}

\noindent If $Q$ does not occur, then $\deg_{G_\pi,\piny}(v) = \min(\deg_{G,\piny}(v),d)$.\newline If $Q$ occurs, then $\deg_{G_\pi,\piny}(v)= d+1=\min(\deg_{G,\piny}(v),d)+1$.\newline Hence, by combining (1) and (2) we obtain
\begin{align}
\begin{split}
\E(\deg_{G_\pi,\piny}(v))&=\E(\min(\deg_{G,\piny}(v),d))+\Prob(Q)\hspace{3.5cm}\\
&\geq d +\frac{1}{2} -\frac{1}{2}\cdot \frac{d(d+1)}{k}\\
&\geq d.
\end{split}
\end{align}
\endgroup

\noindent Thus
\begin{equation}
\hspace{-0.4cm}E(|G_\pi|) = \E\left(\sum_{v\in V} \deg_{G_\pi,\piny}(v)\right) = \sum_{v\in V} \E(\deg_{G_\pi,\piny}(v))\geq d|V|.\tag*{\qedhere}
\end{equation}
\end{proof}

\begin{proof}[Proof of Theorem \ref{ddplus1 rigid}]
 Suppose, for a contradiction, that $G=(V,E)$ is a $d(d+1)$-connected graph that is not rigid in $\R^d$. We may
assume that $G$ has the least possible number $|V|$ of vertices among all such graphs. We
may also assume that $G$ has the largest number of edges among all such graphs on $|V|$ vertices. 
Then, for each $v\in V$, $\N_G(v)$  does not induce a clique in $G$, for otherwise deleting $v$ would result in a smaller counterexample. (Deleting a vertex whose neighbour set induces a clique preserves $k$-connectivity, unless the graph is a complete graph on $k+1$-vertices.)  Furthermore, every linked pair is connected in $G$, for otherwise connecting a non-adjacent linked pair would result in a counterexample with more edges. In particular, the rigid induced subgraphs of $G$ are complete. 

Suppose that $v\in V$ and $H_1, H_2$ are the vertex sets of two different maximal cliques of $G[\N_G(v)]$. Then $G[H_1\cup H_2\cup \{v\}]$ is non-complete and hence non-rigid. It follows that $|(H_1\cup \{v\}) \cap (H_2\cup \{v\})|\leq d-1$. Thus $|H_1\cap H_2|\leq d-2$.

 Hence, by Lemma \ref{Epilemma}, if $\pi$ is a uniformly random ordering of $V$, then   $\E(|E_\pi|)\geq d|V|.$
 This is a contradiction %
by Theorem \ref{theorem:gluck} and the fact that $G_\pi$ is $\mathcal{R}_d$-independent by {Lemma \ref{closedlemma}}. \end{proof} %

\section{Proof of Theorem \ref{ddplus1 globrigid}}\label{sec:proof2}

Fix $d\geq 2$. We adapt the argument of the previous section to prove Theorem \ref{ddplus1 globrigid}. We will need the following strengthening of Lemma \ref{closedlemma}. 

\begin{lemma}\label{tlooselemma}
Let $G=(V,E)$ be a graph and $\pi =(v_1,\dots,v_n)$ an ordering of the vertices of $G$. %
Suppose that every weakly globally linked pair is connected in $G$. Then $G_\pi$ is $\mathcal{R}_d$-independent.
\end{lemma}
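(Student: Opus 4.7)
The plan is to replay the induction from the proof of Lemma \ref{closedlemma}, showing that $F_i = G_\pi[\{v_1,\dots,v_i\}]$ is $\mathcal{R}_d$-independent for every $i$. The base case ($i\leq d$) and cases (a) and (b) require no change: in those cases $F_i$ is a subgraph of a $0$-extension of $F_{i-1}$, and the linkedness hypothesis is not used. The only part that needs a new argument is case (c), where we must show that the non-adjacent pair $\{x,y\}$ chosen in the construction is \emph{loose in $F_{i-1}$}, so that $F_{i-1}+xy$ is $\mathcal{R}_d$-independent and $F_i$ can be viewed as a $1$-extension of it on the edge $xy$ together with the remaining $d-1$ neighbours picked in (c).

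In the original proof, the stronger hypothesis (every linked pair is connected), together with $xy\notin E$, gave immediately that $\{x,y\}$ is loose in $G$, hence loose in $F_{i-1}$. Under the present weaker hypothesis, $xy\notin E$ only yields that $\{x,y\}$ is \emph{globally} loose in $G$, and I need another route to looseness in $F_{i-1}$. I will argue by contradiction using Lemma \ref{wglpath}. Suppose $\{x,y\}$ is linked in $F_{i-1}$, and set $V_0=\{v_1,\dots,v_{i-1}\}$. Since $F_{i-1}$ is a subgraph of $G[V_0]$, the row of $xy$ in the rigidity matrix lies in the row span of $E(F_{i-1})$ and hence in that of $E(G[V_0])$, so $\{x,y\}$ is linked in $G[V_0]$ as well. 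By the choice of $x,y$ in case (c), both vertices lie in $\N_{G,\piny}(v_i)\subseteq V_0$, while $v_i\notin V_0$. Therefore $x\,v_i\,y$ is an $x$--$y$-path in $G$ whose only internal vertex lies outside $V_0$. Lemma \ref{wglpath} now yields that $\{x,y\}$ is weakly globally linked in $G$, and the hypothesis of the present lemma forces $xy\in E$, contradicting case (c).

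The main conceptual obstacle is exactly this gap between ``linked'' and ``weakly globally linked'', and the resolution exploits the fact that the vertex $v_i$ being attached in the $1$-extension provides a free length-$2$ path $x\,v_i\,y$ outside $V_0$, which is precisely the bridge Lemma \ref{wglpath} needs. Once $\{x,y\}$ is known to be loose in $F_{i-1}$, the graph $F_{i-1}+xy$ is $\mathcal{R}_d$-independent and $F_i$ is its $1$-extension at $v_i$, so $F_i$ is $\mathcal{R}_d$-independent, completing the induction. Taking $i=n$ gives that $G_\pi = F_n$ is $\mathcal{R}_d$-independent, as required.
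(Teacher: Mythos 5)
Your proof is correct and follows essentially the same route as the paper: the paper's proof also replays the induction from Lemma \ref{closedlemma}, notes that $xy\notin E$ plus the hypothesis makes $\{x,y\}$ globally loose in $G$, and then invokes the length-two path $x\,v_i\,y$ together with (the contrapositive of) Lemma \ref{wglpath} to conclude $\{x,y\}$ is loose in $F_{i-1}$. You merely spell out the intermediate step that linked in $F_{i-1}\subseteq G[V_0]$ implies linked in $G[V_0]$, which the paper leaves implicit.
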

\begin{proof}
We prove that $F_i=G_\pi[\{v_1,\dots, v_i\}]$ is $\mathcal{R}_d$-independent by induction on $i$. For $i\leq d$, we have $|V(F_i)|\leq d$, which implies that $F_i$ is $\mathcal{R}_d$-independent. Suppose that $d+1\leq i\leq n$.
If (a) or (b) holds, then $F_i$
is a subgraph of a 0-extension of $F_{i-1}$, and thus $F_{i}$ is $\mathcal{R}_d$-independent.
Suppose that (c) holds. As $xy\notin E$, $\{x,y\}$ is globally loose in $G$. Since in $G$ there is an $x$-$y$-path of length two through $v$, the pair $\{x,y\}$ is loose in $F_{i-1}$ by Lemma \ref{wglpath}. It follows that $F_{i-1}+xy$ is $\mathcal{R}_d$-independent. $F_{i}$ is a 1-extension of $F_{i-1}+xy$, and thus $F_{i}$ is $\mathcal{R}_d$-independent. 

We have $F_n=G_\pi$, which completes the proof.
\end{proof}

\begin{proof}[Proof of Theorem \ref{ddplus1 globrigid}]
Suppose, for a contradiction, that $G=(V,E)$ is a $d(d+1)$-connected graph that is not globally rigid in $\R^d$. Again, we may
assume that $G$ is a counterexample with a minimum number of vertices and among these graphs $G$ has a maximum number of edges. 
Then $\N_G(v)$  does not induce a clique, for each $v\in V$. 
Furthermore, 
 every weakly globally linked pair is connected in $G$, for otherwise, by Lemma \ref{J}, connecting a non-adjacent weakly globally linked pair would result in a counterexample with more edges.%
\vspace{-15pt}
\begin{adjustwidth}{15pt}{0pt}
\begin{claim}
Let $v\in V$ and let $H_1$ and $H_2$ be the vertex sets of two different maximal cliques of $G[{ N}_G(v)]$. Then $|H_1\cap H_2|\leq d-2$.\vspace{-0.2cm}
\end{claim}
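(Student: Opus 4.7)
The plan is to imitate the rigid-case argument but replace the conclusion ``rigid induced subgraph is complete'' with the analogous global statement obtained from Lemma \ref{wglpath}: namely, exhibit a non-adjacent pair $\{a,b\}$ and a small set $V_0$ containing them such that $\{a,b\}$ is linked in $G[V_0]$ and some $a$--$b$ path in $G$ avoids $V_0\setminus\{a,b\}$. Then $\{a,b\}$ is weakly globally linked in $G$, contradicting our reduction assumption that every weakly globally linked pair is an edge of $G$.

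To produce the witness non-edge, I would first note that if every pair in $(H_1\setminus H_2)\times (H_2\setminus H_1)$ were an edge of $G$, then $H_1\cup H_2$ would be a clique in $G[N_G(v)]$ strictly containing $H_1$, contradicting the maximality of $H_1$. Hence I can pick $a\in H_1\setminus H_2$ and $b\in H_2\setminus H_1$ with $ab\notin E$; note $a,b\neq v$ since both lie in $N_G(v)$. Suppose, for a contradiction, that $|H_1\cap H_2|\geq d-1$, and choose a subset $K'\subseteq H_1\cap H_2$ of size exactly $d-1$. Set $V_0 = K'\cup\{a,b,v\}$, so $|V_0|=d+2$.

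The sets $K'\cup\{a,v\}\subseteq H_1\cup\{v\}$ and $K'\cup\{b,v\}\subseteq H_2\cup\{v\}$ are both cliques of size $d+1$ in $G$, and their intersection $K'\cup\{v\}$ has size $d$; so their union spans $V_0$ and is rigid in $\R^d$, making $G[V_0]$ rigid and $\{a,b\}$ linked in $G[V_0]$. Since $a,b$ are non-adjacent and $|V_0\setminus\{a,b\}|=d<d(d+1)$, Menger's theorem applied to the $d(d+1)$-connected graph $G$ produces $d(d+1)$ internally disjoint $a$--$b$ paths, at least one of which must entirely avoid the $d$-vertex set $V_0\setminus\{a,b\}$; that path is internally disjoint from $V_0$. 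Applying Lemma \ref{wglpath} to the pair $\{a,b\}$ and this $V_0$, we conclude that $\{a,b\}$ is weakly globally linked in $G$, so $ab\in E$ by the reduction assumption, contradicting $ab\notin E$.

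The only real calibration is to pick $V_0$ large enough that $\{a,b\}$ is linked in $G[V_0]$ (which forces $|K'\cup\{v\}|\geq d$, i.e.\ $|K'|\geq d-1$) yet small enough that $V_0\setminus\{a,b\}$ cannot separate $a$ from $b$ in $G$ (automatic once $|V_0\setminus\{a,b\}|<d(d+1)$, by $d(d+1)$-connectivity). Taking $|K'|=d-1$ meets both constraints simultaneously, so I do not anticipate any substantive obstacle beyond verifying this balance.
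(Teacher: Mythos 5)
Your proof is correct, and it takes a route that is genuinely leaner than the paper's. The paper first establishes that $G[H_1\cup H_2]$ is rigid (splitting into the cases $|H_1\cap H_2|\geq d$ and $|H_1\cap H_2|=d-1$, the latter requiring one preliminary application of Lemma~\ref{wglpath} to manufacture an edge $u_1u_2$), and only then applies Lemma~\ref{wglpath} a second time with $V_0=H_1\cup H_2$ and the length-two paths through $v$ to conclude that every pair in $H_1\cup H_2$ is an edge, contradicting maximality. You instead go straight for a single witness non-edge $\{a,b\}$ with $a\in H_1\setminus H_2$, $b\in H_2\setminus H_1$ (guaranteed by maximality), and choose a much smaller $V_0=K'\cup\{a,b,v\}$ of size $d+2$; the two size-$(d+1)$ cliques $K'\cup\{a,v\}$ and $K'\cup\{b,v\}$ overlap in $d$ vertices, so $G[V_0]$ is rigid and $\{a,b\}$ is linked there, and $(d+1)$-connectivity immediately yields an $a$--$b$ path avoiding the $d$-vertex set $V_0\setminus\{a,b\}$. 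One application of Lemma~\ref{wglpath} then finishes. The trade-off: because you put $v$ inside $V_0$, you cannot use the paper's convenient length-two paths through $v$ and must instead invoke connectivity, but the payoff is that you avoid the case split, the intermediate rigidity claim for $G[H_1\cup H_2]$, and one of the two uses of Lemma~\ref{wglpath}. Both arguments use only $(d+1)$-connectivity at this point (your appeal to $d(d+1)$ internally disjoint paths is more than needed, since $d<d+1$ already suffices), so neither is wasteful of the hypothesis.
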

\begin{proof}
Suppose, for a contradiction, that $|H_1\cap H_2|\geq d-1$. 

We claim that $G[H_1\cup H_2]$ is rigid. If $|H_1\cap H_2|\geq d$, this is obvious. Suppose that $|H_1\cap H_2|= d-1$. Since $G$ is $d+1$-connected, there is a path $P$ in $G-((H_1\cap H_2)\cup \{v\})$ that connects $H_1-H_2$ and $H_2-H_1$. By taking a subpath if necessary, we may assume that $P$ is internally disjoint from $H_1\cup H_2\cup \{v\}$. Let $u_1$ and $u_2$ denote the end-vertices of $P$. Since $G[H_1\cup H_2\cup \{v\}]$ is rigid, $\{u_1,u_2\}$ is weakly globally linked in $G$ by Lemma \ref{wglpath}. As every weakly globally linked pair is connected in $G$, we have $u_1u_2\in E$. It follows that $G[H_1\cup H_2]$ is indeed rigid.

 For every pair $x_1,x_2\in H_1\cup H_2$ with $x_1\neq x_2$, there is an $x_1$-$x_2$-path of length two through $v$. Hence, 
$\{x_1,x_2\}$ is weakly globally linked in $G$ by Lemma \ref{wglpath}, and thus $x_1x_2\in E$. It follows that $H_1\cup H_2$ induces a clique in $G$, a contradiction.
\end{proof}
\end{adjustwidth}

\noindent By Lemma \ref{Epilemma}, if $\pi$ is a uniformly random ordering of $V$, then  $\E(|E_\pi|)\geq d|V|.$
This is a contradiction %
by Theorem \ref{theorem:gluck} and the fact that $G_\pi$ is $\mathcal{R}_d$-independent  by Lemma \ref{tlooselemma}.
\end{proof}

\section{Extending the results}\label{sec:sharp}
In this section, we address two related questions. The first one is how close a $d(d+1)$-connected graph can be to not being (globally) rigid. The second one is how far a $k$-connected graph can be from being rigid when $k<d(d+1)$. The arguments will be similar to those we have seen and thus we will only sketch the proofs.
\newpage
\begin{theorem}
Every $d(d+1)$-connected graph is $\binom{d+1}{2}+1$-redundantly rigid and $\binom{d+1}{2}$-redundantly globally rigid in $\R^d$.
\end{theorem}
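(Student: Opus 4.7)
My plan is to adapt the minimum-counterexample argument of Sections~\ref{sec:proof1} and~\ref{sec:proof2}, running the random-ordering construction of Lemma~\ref{Epilemma} on $G - E_0$ and exploiting the fact that the bound $\E(|E_\pi|) \geq d|V|$ has exactly $\binom{d+1}{2}$ slack over the Gluck bound $d|V| - \binom{d+1}{2}$. For the rigidity claim, I would assume for contradiction that $(G, E_0)$ is a counterexample with $G$ being $d(d+1)$-connected, $|E_0| \leq \binom{d+1}{2}$, and $G - E_0$ not rigid, chosen with $|V|$ minimum, then $|E|$ maximum, then $|E_0|$ minimum. Maximality of $|E|$ forces every non-edge of $G$ to be loose in $G - E_0$, and minimality of $|E_0|$ forces every $e \in E_0$ also to be loose in $G - E_0$; together, $E(G - E_0)$ is closed in $\mathcal{R}_d$ and $r_d(G - E_0) = d|V| - \binom{d+1}{2} - 1$.

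With these properties I would derive the conditions needed to apply Lemma~\ref{Epilemma} to $G - E_0$: (i) no $v$ has $N_{G-E_0}(v)$ inducing a clique in $G - E_0$ (otherwise the counting $|N_{G-E_0}(v)| \geq d(d+1) - \binom{d+1}{2} = \binom{d+1}{2} \geq d$ gives a $d$-clique attachment and a 0-extension inverse produces a smaller counterexample); and (ii) any two distinct maximal cliques of $(G-E_0)[N_{G-E_0}(v)]$ share at most $d-2$ vertices (otherwise their union with $v$ is a rigid induced subgraph of $G - E_0$ which, by closedness, is a clique in $G - E_0$, contradicting distinctness). Granted (i) and (ii), Lemma~\ref{closedlemma} applied to $G - E_0$ ensures that $G_\pi \subseteq G - E_0$ is $\mathcal{R}_d$-independent for every ordering $\pi$.

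For the expected-value bound I would split $V$ into $V \setminus V_0$ where $\deg_{G-E_0}(v) \geq d(d+1)$, and a ``low-degree'' set $V_0$ with $|V_0| \leq 2|E_0| \leq d(d+1)$. The computation in Lemma~\ref{Epilemma} gives $\E(\deg_{G_\pi,\piny}(v)) \geq d$ on $V \setminus V_0$, and, using the still-valid lower bound $\deg_{G-E_0}(v) \geq \binom{d+1}{2}$ on $V_0$, it yields $\E(\deg_{G_\pi,\piny}(v)) \geq d - \tfrac{1}{2}$ there. Summing gives $\E(|E_\pi|) \geq d|V| - \tfrac{1}{2}|V_0| \geq d|V| - \binom{d+1}{2}$, so some ordering achieves this bound and $r_d(G - E_0) \geq d|V| - \binom{d+1}{2}$, contradicting non-rigidity.

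The globally rigid part will follow the same outline with ``weakly globally linked'' replacing ``linked'' throughout: Lemma~\ref{J} provides the maximality principle, Lemma~\ref{tlooselemma} replaces Lemma~\ref{closedlemma}, and the clique-intersection derivation (ii) parallels the claim in the proof of Theorem~\ref{ddplus1 globrigid}, invoking Lemma~\ref{wglpath} inside $G - E_0$. I expect the main obstacle, in both parts, to be step~(i): since $N_{G-E_0}(v)$ being a clique in $G - E_0$ does not imply that $N_G(v)$ is a clique in $G$ (the ``extra'' neighbors coming from $v$-incident $E_0$-edges may spoil it), deleting $v$ from $G$ may drop its connectivity below $d(d+1)$. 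Resolving this likely requires either a local swap inside $E_0$ — replacing $v$-incident $E_0$-edges with edges elsewhere while preserving non-rigidity of $G - E_0$ — or an additional extremal condition on $(G, E_0)$ that rules out the obstructing configuration.
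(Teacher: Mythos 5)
Your overall plan is close to the paper's, and you have correctly identified the real sticking point, but the gap at step~(i) is not a detail that can be patched with a swap or an extra extremal condition — it is the place where the argument must change. The paper does \emph{not} try to establish that $N_{G-E_0}(v)$ is non-clique for the vertices incident to $E_0$. Instead, for those vertices it drops that hypothesis entirely and falls back on the weaker bound coming from equation~(1) of Lemma~\ref{Epilemma} alone, namely
\[
\E\bigl(\deg_{G_\pi,\piny}(v)\bigr)\;\geq\;\E\bigl(\min(\deg_{G-E_0,\piny}(v),d)\bigr)\;=\;d-\tfrac12\cdot\frac{d(d+1)}{\deg_{G-E_0}(v)+1},
\]
which holds with no clique condition at all. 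This is the key idea your proposal is missing.

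There is also an arithmetic problem with your per-vertex estimate on $V_0$. You claim $\E(\deg_{G_\pi,\piny}(v))\geq d-\tfrac12$ there using $\deg_{G-E_0}(v)\geq\binom{d+1}{2}$. That inequality would follow from the \emph{full} Lemma~\ref{Epilemma} bound $d+\tfrac12-\tfrac12\cdot\frac{d(d+1)}{k}$ with $k\geq\binom{d+1}{2}$; but you cannot invoke the full lemma precisely because step~(i) fails. Using only equation~(1), the bound degrades to roughly $d-1$ in the worst case ($\deg_{G-E_0}(v)\approx\binom{d+1}{2}$), so your uniform estimate $d-\tfrac12$ is too optimistic. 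The paper instead sums the vertex-dependent bound and uses $\deg_{G-E_0}(v)\geq d(d+1)-\deg_{E_0}(v)$ to get
\[
\E(|E_\pi|)\;\geq\;d|V|-\sum_{v\in V(E_0)}\tfrac12\cdot\frac{d(d+1)}{d(d+1)-\deg_{E_0}(v)+1}\;\geq\;d|V|-\binom{d+1}{2},
\]
where the last inequality is tight when the $E_0$-edges are independent, and the right-hand side only grows when they are not. The trade-off here is essential: when $E_0$-edges overlap, the per-vertex contribution to the deficit is larger but the number of affected vertices is smaller, and a uniform worst-case bound on both simultaneously (as in your version) is strictly lossy. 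So the fix is not to rescue property~(i) but to abandon it on $V(E_0)$, accept the weaker bound there, and then bound the total deficit jointly in terms of $\deg_{E_0}(v)$. Everything else in your sketch — maximality forcing closedness, the clique-intersection bound via closedness, the passage to the globally rigid case via Lemma~\ref{J}, Lemma~\ref{tlooselemma}, and Lemma~\ref{wglpath} — is sound.
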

\begin{proof}[Proof sketch.]
 Let $d\geq 2$. Suppose, for a contradiction, that $G=(V,E)$ is a non-rigid graph that can be made $d(d+1)$-connected by adding a set $E'$ of at most $\binom{d+1}{2}$ new edges. 
Assume that $G$ has a minimum number of vertices and, with respect to that, a maximum number of edges. Then every linked pair is connected in $G$. Furthermore, for each $v\notin V(E')$, $N_G(v)$ does not induce a clique and $\deg_G(v)\geq d(d+1)$, and hence $\E(\deg_{G_\pi,\piny}(v))\geq d$ follows from the proof of Lemma \ref{Epilemma}. However, for $v\in V(E')$, $N_G(v)$ might induce a clique and we might have $\deg_G(v)< d(d+1)$. Still, for $v\in V(E')$, it follows from Lemma \ref{Epilemma}(1) that $\E(\deg_{G_\pi,\piny}(v))\geq d-\frac{1}{2}\cdot\frac{d(d+1)}{\deg_G(v)+1}$.
By summing over the vertices we obtain 
$$\E(|E_\pi|)\geq d|V| - \sum_{v\in V(E')} \frac{1}{2}\cdot\frac{d(d+1)}{\deg_G(v)+1}\geq d|V|-\sum_{v\in V(E')} \frac{1}{2}\cdot\frac{d(d+1)}{d(d+1)-\deg_{E'}(v)+1}.$$

\noindent If $|E'|=\binom{d+1}{2}$ and the edges of $E'$ are independent (i.e., $|V(E')|=2|E'|=d(d+1)$), then the right-hand side equals $d|V|-\binom{d+1}{2}$. It is easy to prove that if this condition is not met, the right-hand side becomes even larger.  
This contradicts the fact that $G$ is not rigid.

The proof of $\binom{d+1}{2}$-redundant global rigidity is similar. We omit the details. 
\end{proof}

Take two complete graphs on  $d(d+1)$ vertices and connect them with $d(d+1)$ independent edges.
The resulting graph shows that the constants $\binom{d+1}{2}+1$ and $\binom{d+1}{2}$ are best possible.\medskip

For $1\leq k\leq d$ let $m_{d,k}= \frac{k}{2} $, and for $d+1\leq k< d(d+1)$ let $m_{d,k}= d+\frac{1}{2}-\frac{1}{2}\cdot\frac{d(d+1)}{k}$. %

\begin{theorem}\label{minrankkconneted}
For $1\leq k< d(d+1)$, if $G$ is a non-rigid $k$-connected graph in $\R^d$, then $r_d(G)\geq m_{d,k}|V|.$
\end{theorem}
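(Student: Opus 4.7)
My plan is to adapt the proof of Theorem~\ref{ddplus1 rigid} quantitatively, with $m_{d,k}$ replacing $d$ as the target density. I would argue by contradiction: take $G=(V,E)$ to be a $k$-connected non-rigid graph with $r_d(G)<m_{d,k}|V|$, minimizing $|V|$ and then maximizing $|E|$. As in Section~\ref{sec:proof1}, edge-maximality forces every non-adjacent linked pair to be an edge, because adding such a pair preserves rank, $k$-connectivity, and non-rigidity. In particular, every rigid induced subgraph of $G$ is complete.

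I would then analyze the random construction $G_\pi$ of Section~\ref{sec:proof1}. Lemma~\ref{closedlemma} gives $\mathcal{R}_d$-independence of $G_\pi$, so $r_d(G)\geq|E_\pi|$ for every ordering $\pi$; it therefore suffices to show $\E(|E_\pi|)\geq m_{d,k}|V|$ for a uniformly random $\pi$. For $k\leq d$ this is easy: $\E(\deg_{G_\pi,\piny}(v))\geq\E(\min(\deg_{G,\piny}(v),d))\geq k/2=m_{d,k}$ by a direct case split on $\deg_G(v)\geq k$, with no further structural hypothesis on $G$ required. For $k\geq d+1$ I would first re-establish the two structural facts underlying Lemma~\ref{Epilemma}: (i) $N_G(v)$ does not induce a clique for any $v$, and (ii) any two distinct maximal cliques $H_1,H_2$ of $G[N_G(v)]$ satisfy $|H_1\cap H_2|\leq d-2$. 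With (i) and (ii) in hand, the pointwise computation from Lemma~\ref{Epilemma}, applied with $K=\deg_G(v)$ in place of the previous fixed bound $d(d+1)$, yields $\E(\deg_{G_\pi,\piny}(v))\geq d+\tfrac12-\tfrac12\cdot\tfrac{d(d+1)}{\deg_G(v)}\geq m_{d,k}$, where the last inequality uses monotonicity of the right-hand side in $\deg_G(v)$ together with $\deg_G(v)\geq k$.

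Property (ii) is a quick consequence of ``rigid implies complete'': if $|H_1\cap H_2|\geq d-1$, then $G[H_1\cup\{v\}]$ and $G[H_2\cup\{v\}]$ are cliques sharing at least $d$ vertices, so their union is rigid, yet it is not complete because $H_1$ and $H_2$ are distinct maximal cliques of $G[N_G(v)]$. For property (i), if some $v$ is simplicial, then $\deg_G(v)\geq k\geq d+1$, so $G-v$ is still $k$-connected (simplicial vertices miss all minimum separators) and still non-rigid (the clique $G[N_G(v)\cup\{v\}]$ shares at least $d$ vertices with $G-v$, so rigidity of $G-v$ would transfer back to $G$). Clique-gluing further yields $r_d(G-v)=r_d(G)-d$, so using $m_{d,k}<d$ (which holds throughout $1\leq k<d(d+1)$),
\[
r_d(G-v)=r_d(G)-d<m_{d,k}|V|-d<m_{d,k}(|V|-1),
\]
contradicting the minimality of $|V|$.

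The main obstacle I anticipate is the rank bookkeeping $r_d(G-v)=r_d(G)-d$ in the simplicial case: once $v$ is attached to $d$ vertices of the rigid clique $G[N_G(v)]$ via a $0$-extension, one has to argue that the remaining $\deg_G(v)-d$ edges from $v$ are redundant in the rigidity matroid. This is the only place where the argument genuinely has to go beyond a direct quantitative rerun of the proof of Theorem~\ref{ddplus1 rigid} with $m_{d,k}$ in place of $d$.
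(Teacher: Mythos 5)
Your proposal is correct and follows essentially the same route as the paper's proof sketch, which likewise deletes a simplicial vertex when one exists and reruns the first two lines of Lemma~\ref{Epilemma}(3) with $\deg_G(v)\geq k$ in place of $\deg_G(v)\geq d(d+1)$, after establishing the clique-freeness of neighbourhoods and the $\leq d-2$ maximal-clique-intersection bound from linked-pair saturation. The rank bookkeeping you flag as the main obstacle is not a real one: since $G[N_G(v)\cup\{v\}]$ is a complete graph on at least $d+2$ vertices, a $0$-extension from $v$ to any $d$ of its neighbours already places every remaining edge at $v$ in the matroid closure, so $r_d(G)\leq r_d(G-v)+d$ follows directly (and the reverse inequality is immediate from the $0$-extension).
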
 
\begin{proof}[Proof sketch.] Suppose, for a contradiction, that $G=(V,E)$ is a counterexample with a minimum number of vertices and, with respect to that, a maximum number of edges. Suppose that $k\geq d+1$. It can be deduced from $m_{d,k}<\min(k,d)$ that, for $v\in V$, $N_G(v)$ does not induce a clique. 
Thus, the proof of Lemma \ref{Epilemma}  implies $\E(|E_\pi|)\geq m_{d,k}|V|$, which gives a contradiction. (Note the first two lines of Lemma \ref{Epilemma}(3).) 

Suppose that $k\leq d$. It is easy to prove that for $v\in V$
$$\E(\deg_{G_\pi,\piny}(v))\geq\E(\min(\deg_{G,\piny}(v),d))= \sum_{i=0}^{\deg_G(v)}\frac{\min(i,d)}{\deg_G(v)+1}\geq \sum_{i=0}^k\frac{i}{k+1} =\frac{k}{2}.$$
Thus, we get $\E(|E_\pi|)\geq \frac{k}{2}|V|=m_{d,k}|V|$, a contradiction.
\end{proof}
 Note that if $|V|$ is large, then $m_{d,k}|V| < d|V|-\binom{d+1}{2}$. For $2\leq k< d(d+1)$, there exist infinitely many non-rigid $k$-connected graphs, similar to the example in Figure \ref{fig:nonstiff}, for which in Theorem \ref{minrankkconneted} equality holds. Hence, the constant $m_{d,k}$ is best possible. %

\section{Concluding remarks}\label{sec:concluding}
 \subsection{Non-trivial globally rigid subgraphs}

We say that a globally rigid graph in $\R^d$ is a {\it non-trivial} globally rigid graph in $\R^d$ if it has at least $ d+2$ vertices.
For a graph $G$, let ${\rm grn}^*(G)$ denote the largest $d$ such that $G$ has a non-trivial globally rigid subgraph in $\R^d$, 
or 0 if there is no such $d$. It was proved by 
Garamvölgyi and Jordán
   that 
    Theorem \ref{ddplus1 rigid} would imply a lower bound for ${\rm grn}^*(G)$, see \cite[Theorem 5.9]{GJmgr}.
Their  theorem takes the following form if we combine it with Theorem \ref{ddplus1 rigid}.

\begin{theorem}\cite{GJmgr}\label{grn}
For every graph $G=(V,E)$, we have ${\rm grn}^*(G)\geq \left\lfloor\sqrt{\frac{|E|}{6\cdot|V|}} \right\rfloor.$
\end{theorem}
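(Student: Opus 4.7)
The plan is to reduce Theorem \ref{grn} to Theorem \ref{ddplus1 globrigid} via a density-to-connectivity extraction. I would set $d_0 = \lfloor\sqrt{|E|/(6|V|)}\rfloor$. The claim is trivial when $d_0 = 0$, so assume $d_0 \geq 1$; then $|E| \geq 6d_0^2|V|$, so the average degree of $G$ is at least $12d_0^2$.

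The central step would be Mader's classical theorem: every graph of average degree at least $4k$ contains a subgraph of vertex connectivity at least $k+1$. I would apply this with $k = d_0(d_0+1)-1$. The hypothesis $12d_0^2 \geq 4(d_0(d_0+1)-1)$ reduces to $8d_0^2 - 4d_0 + 4 \geq 0$, which is trivially true, so $G$ contains a subgraph $H$ whose vertex connectivity is at least $d_0(d_0+1)$. By Theorem \ref{ddplus1 globrigid}, $H$ is globally rigid in $\R^{d_0}$. Since $|V(H)| \geq d_0(d_0+1)+1 \geq d_0+2$, the subgraph $H$ is non-trivial, giving ${\rm grn}^*(G) \geq d_0$, as required.

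The only real obstacle is calibrating the constants: one must check that the $6$ in $\sqrt{|E|/(6|V|)}$ is compatible with Mader's $4$ in the degree-to-connectivity bound, and the inequality above does precisely that. Phrased in the conditional language of \cite{GJmgr}, the same chain of inferences shows that any hypothesis of the form ``every $c$-connected graph is globally rigid in $\R^d$'' automatically upgrades to a density-based lower bound on ${\rm grn}^*(G)$; substituting $c = d(d+1)$ via Theorem \ref{ddplus1 globrigid} produces exactly the stated estimate. One could equivalently route through Theorem \ref{ddplus1 rigid} together with Tanigawa's theorem (which loses one in the connectivity but costs nothing asymptotically), matching the original framing in \cite{GJmgr}.
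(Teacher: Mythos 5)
Your argument is correct. Note, however, that the paper does not actually prove Theorem~\ref{grn}: it is obtained purely by citation, quoting~\cite[Theorem 5.9]{GJmgr} (a conditional statement: ``if every $d(d+1)$-connected graph is rigid in $\R^d$, then $\mathrm{grn}^*(G)\geq\lfloor\sqrt{|E|/(6|V|)}\rfloor$'') and plugging in Theorem~\ref{ddplus1 rigid}. So there is no in-paper proof to compare against; what you have written is a self-contained reconstruction of the Garamv\"olgyi--Jord\'an argument. Your reconstruction is the expected one --- extract a highly connected subgraph via Mader (average degree $\geq 4k$ yields a $(k+1)$-connected subgraph), then invoke the connectivity-implies-rigidity theorem --- and your constant check $12d_0^2\geq 4(d_0(d_0+1)-1)$ is fine. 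Your choice to route through Theorem~\ref{ddplus1 globrigid} directly rather than Theorem~\ref{ddplus1 rigid} plus Tanigawa's theorem is a mild streamlining, exactly as you note; since~\cite{GJmgr} predates Theorem~\ref{ddplus1 globrigid}, they necessarily phrase things conditionally on the rigidity statement and use Tanigawa to upgrade, but the asymptotics are unaffected. One small observation: your derivation would in fact go through with the constant $3$ in place of $6$ (one gets $2d_0^2-4d_0+4=2((d_0-1)^2+1)\geq 0$), so the~$6$ appearing in the theorem is an artifact of the exact form of Mader's theorem used in~\cite{GJmgr} rather than the tightest outcome of this method; the ``asymptotic tightness'' remark in the paper concerns the $\sqrt{|E|/|V|}$ growth rate, not the multiplicative constant.
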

Complete bipartite graphs show that the  lower bound given by Theorem \ref{grn}
is asymptotically tight \cite{GJmgr}.

\subsection{Abstract rigidity matroids and cofactor matroids}
Let $\Mat$ be a matroid defined on the edge set $K$ of a complete graph $(V,K)$, with rank function $r_\Mat$ and closure operator $\langle \cdot\rangle$. 
 We say that $\Mat$ is a \emph{$d$-dimensional abstract rigidity matroid} if (1) and (2) hold.
 
\begin{enumerate}[(1),noitemsep,topsep=0pt]
\item If $E, F \subseteq K$ and $|V(E) \cap V(F)| < d$, then 
$\langle E \cup F\rangle \subseteq K(V(E)) \cup K(V(F))$.
\end{enumerate}

\noindent By putting $F=\emptyset$, it follows from (1) that $\langle E\rangle\subseteq K(V(E))$. An edge set $E\subseteq K$
is called  \emph{$\Mat$-rigid} if $\langle E\rangle = K(V(E))$. A subgraph $(U,E)$ of $(V,K)$ is called {\it $\Mat$-rigid} if $\langle E\rangle = K(U)$.

\begin{enumerate}[(2),noitemsep,topsep=0pt]
\item If $E,F \subseteq K$ are $\Mat$-rigid and $|V(E)\cap V(F)|\geq  d$, then $E\cup F$ is $\Mat$-rigid. 
\end{enumerate}

\noindent Suppose that $\mathcal{M}$ is a $d$-dimensional abstract rigidity matroid. 
 We say that a subgraph $(U,E)$ of $(V,K)$ is \emph{$\Mat$-independent}   if $E$ is $\Mat$-independent.  
A vertex pair $\{u,v\}\subseteq U$ is called \emph{$\Mat$-linked} in $(U,E)$ if $uv\in \langle E\rangle$.\smallskip

 Let $E\subseteq K$. Then the following hold  \mbox{\cite[Lemma 2.5.6]{GSS}}.
\begin{itemize}[label=\raisebox{0.25ex}{\small$\bullet$},noitemsep,topsep=0pt]
  \item  If $|V(E)|\leq d$, then $E$ is $\Mat$-independent.
  \item If %
  $|V(E)|\geq d+1$, then $r_\Mat(E)\leq d|V(E)|-\binom{d+1}{2}$.
  \item If %
  $|V(E)|\geq d+1$ and $E$ is $\Mat$-rigid, then $r_\Mat(E)= d|V(E)|-\binom{d+1}{2}$.
  \item The $d$-dimensional 0-extension operation preserves $\Mat$-independence. %
\end{itemize}
If the $d$-dimensional 1-extension operation  also preserves $\Mat$-independence, then $\Mat$ is called a {\it 1-extendable abstract rigidity matroid} \cite{Ng}. Note that the rigidity matroid of $(V,K)$ is a \mbox{1-extendable} abstract rigidity matroid. 

 Using the properties listed above, the proof of Theorem \ref{ddplus1 rigid} yields the following statement. %

\begin{theorem}\label{arm}
Let %
$(V,K)$ be a complete graph and $\mathcal{M}$ a 1-extendable $d$-dimensional abstract rigidity matroid on $K$. Then every $d(d+1)$-connected subgraph $(U,E)$ of $(V,K)$ is $\mathcal{M}$-rigid.
\end{theorem}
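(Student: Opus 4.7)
The plan is to rerun the proof of Theorem \ref{ddplus1 rigid} almost verbatim, with each rigidity-theoretic input replaced by its abstract analogue from axioms (1), (2) and the bulleted facts. I would take a counterexample $(U,E)$ that is $d(d+1)$-connected and not $\Mat$-rigid, with $|U|$ minimum and then $|E|$ maximum subject to this. Edge-maximality immediately gives that every $\Mat$-linked pair of $(U,E)$ is an edge, since adding a non-adjacent $\Mat$-linked pair preserves $d(d+1)$-connectivity and keeps $\langle E\rangle$ strictly inside $K(U)$. In particular, every $\Mat$-rigid induced subgraph of $(U,E)$ is complete.

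Next I would show that no vertex $v$ of $(U,E)$ has $N(v)$ inducing a clique. If it did, then $(U,E)-v$ would still be $d(d+1)$-connected and hence $\Mat$-rigid by minimality, so $\langle E-v\rangle = K(U-v)$. Since $N(v)\cup\{v\}$ induces a complete subgraph, $K(N(v)\cup\{v\})\subseteq E$, and it shares $|N(v)|\geq d$ vertices with $K(U-v)$. Axiom (2) applied to these two $\Mat$-rigid edge sets yields $\langle E\rangle\supseteq\langle K(U-v)\cup K(N(v)\cup\{v\})\rangle = K(U)$, contradicting that $(U,E)$ is not $\Mat$-rigid. By the same gluing trick I would establish the key geometric claim, the counterpart of the argument in the proof of Theorem \ref{ddplus1 rigid}: for every $v\in U$ and every two distinct maximal cliques $H_1,H_2$ of $G[N(v)]$, $|H_1\cap H_2|\leq d-2$. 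Otherwise $H_1\cup\{v\}$ and $H_2\cup\{v\}$ are cliques of $(U,E)$ sharing at least $d$ vertices, so by axiom (2) the induced subgraph on $H_1\cup H_2\cup\{v\}$ is $\Mat$-rigid; but then every pair of $H_1\cup H_2$ is $\Mat$-linked and hence an edge, contradicting maximality of $H_1$ and $H_2$.

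From here the argument closes as in Section \ref{sec:proof1}. Lemma \ref{Epilemma} is purely combinatorial and gives $\E(|E_\pi|)\geq d|U|$ for a uniformly random ordering $\pi$. The proof of Lemma \ref{closedlemma} uses only that $d$-dimensional $0$- and $1$-extensions preserve $\mathcal{R}_d$-independence, which in the abstract setting is provided by the last bullet and by the assumed $1$-extendability of $\Mat$; hence $G_\pi$ is $\Mat$-independent, and the rank bound of the second bullet yields $|E_\pi|\leq d|U|-\binom{d+1}{2}$, the desired contradiction. I expect the main obstacle to be the vertex-deletion reduction and the clique-intersection claim described above; once those are transcribed through axiom (2), everything else is a mechanical translation of Section \ref{sec:proof1}.
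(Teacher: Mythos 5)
Your proof is correct and is essentially the argument the paper intends: the paper merely asserts that the proof of Theorem~\ref{ddplus1 rigid} carries over using the listed axioms, and you have correctly transcribed each rigidity-theoretic ingredient (the rank bound, $0$- and $1$-extension preserving independence, and the gluing lemma) into its abstract counterpart, with axiom~(2) doing the gluing work in both the vertex-deletion reduction and the maximal-clique intersection bound. The only point left implicit, as in the paper's own parenthetical in Section~\ref{sec:proof1}, is that deleting a vertex whose neighbourhood induces a clique preserves $d(d+1)$-connectivity because a counterexample cannot be complete (complete subgraphs of $(V,K)$ are $\Mat$-rigid since closure contains the set).
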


For $d\geq 2$, %
{\it generic $C_{d-1}^{d-2}$-cofactor matroids} of complete graphs are also examples of 1-extendable $d$-dimensional abstract rigidity matroids, see \cite[Theorem 11.3.10, Corollary 11.3.15]{WhitM}. Thus Theorem \ref{arm} implies the following statement, a conjecture of Whiteley \mbox{\cite[Conjecture 11.5.2(2)]{WhitM}}.

\begin{theorem}\label{cofactor}
For $d\geq 2$, every $d(d+1)$-connected graph is generically $C_{d-1}^{d-2}$-rigid.
\end{theorem}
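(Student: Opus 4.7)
The plan is to deduce Theorem \ref{cofactor} as a direct corollary of Theorem \ref{arm}. Given $d\geq 2$ and a $d(d+1)$-connected graph $G=(U,E)$, I view it as a subgraph of the complete graph $(U,K)$ on its own vertex set and equip $K$ with the generic $C_{d-1}^{d-2}$-cofactor matroid $\mathcal{M}$. The only external input needed is that $\mathcal{M}$ is a $1$-extendable $d$-dimensional abstract rigidity matroid, which is exactly \cite[Theorem 11.3.10, Corollary 11.3.15]{WhitM}. With this in hand, Theorem \ref{arm} applied to $\mathcal{M}$ yields that $(U,E)$ is $\mathcal{M}$-rigid, and $\mathcal{M}$-rigidity is by definition generic $C_{d-1}^{d-2}$-rigidity.

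The substantive content therefore lies in Theorem \ref{arm}, and my proof of that consists of rerunning the argument of Section \ref{sec:proof1} with $\mathcal{R}_d$ replaced throughout by $\mathcal{M}$ and ``linked'' by ``$\mathcal{M}$-linked''. The bullet points preceding Theorem \ref{arm} supply exactly the ingredients that Section \ref{sec:proof1} consumes: the rank bound $r_{\mathcal{M}}(E)\leq d|V(E)|-\binom{d+1}{2}$, its equality case characterizing $\mathcal{M}$-rigid edge sets, invariance of $\mathcal{M}$-independence under $0$-extensions (from the axioms of an abstract rigidity matroid) and under $1$-extensions (from $1$-extendability). Lemma \ref{rankbound} is a purely matroidal consequence of the first of these bounds together with submodularity, so its $\mathcal{M}$-analogue comes for free, and Lemmas \ref{comblemma} and \ref{Epilemma} are combinatorial statements about random orderings and neighbourhood cliques that never touch the matroid at all.

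With these tools in place the proof proceeds exactly as before. I pick a vertex-minimal, edge-maximal $d(d+1)$-connected non-$\mathcal{M}$-rigid graph $G$. Edge-maximality forces every $\mathcal{M}$-linked pair to be an edge, and the clique-intersection argument of Section \ref{sec:proof1} then gives $|H_1\cap H_2|\leq d-2$ for any two distinct maximal cliques $H_1,H_2$ of $G[N_G(v)]$ around any vertex $v$. Taking a uniformly random ordering $\pi$ of $V(G)$, Lemma \ref{Epilemma} supplies $\E(|E_\pi|)\geq d|V(G)|$, while the $\mathcal{M}$-version of Lemma \ref{closedlemma} guarantees that $G_\pi$ is $\mathcal{M}$-independent; combining these contradicts the Gluck-type rank bound for $\mathcal{M}$.

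The main obstacle I anticipate is the careful transcription of Lemma \ref{closedlemma} into the abstract language, specifically verifying that in case (c) the implication ``$xy\notin E$ and $\{x,y\}$ is $\mathcal{M}$-loose imply $F_{i-1}+xy$ is $\mathcal{M}$-independent'' remains valid. This, however, is immediate from the definition of the matroid closure operator $\langle\cdot\rangle$ applied to the non-edge $xy$, so the translation goes through without difficulty and Theorem \ref{cofactor} follows as the promised one-line consequence.
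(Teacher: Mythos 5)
Your proposal matches the paper's approach exactly: the paper likewise derives Theorem~\ref{cofactor} as an immediate corollary of Theorem~\ref{arm}, citing \cite[Theorem 11.3.10, Corollary 11.3.15]{WhitM} for the fact that the generic $C_{d-1}^{d-2}$-cofactor matroid is a $1$-extendable $d$-dimensional abstract rigidity matroid, and it obtains Theorem~\ref{arm} by rerunning the Section~\ref{sec:proof1} argument with $\mathcal{R}_d$ replaced by $\mathcal{M}$, using the listed Gluck-type rank bound and the $0$- and $1$-extension properties. Your sketch of that transcription (edge-maximality forcing $\mathcal{M}$-linked pairs to be edges, the clique-intersection bound via axiom (2), Lemmas~\ref{comblemma} and~\ref{Epilemma} being matroid-free, and the closure argument in case (c) of Lemma~\ref{closedlemma}) is correct and is what the paper intends.
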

 Theorem \ref{cofactor} is a generalization of a recent result of Clinch, Jackson and Tanigawa, which states that 12-connected graphs are generically $C_{2}^{1}$-rigid \cite{CJT2}.

\section{Acknowledgements}
I am grateful to Tibor Jordán, who introduced me to rigidity theory and the problems discussed in this paper. I thank him for many useful remarks on the manuscript and for suggesting Theorems \ref{arm} and \ref{cofactor}.


\begin{thebibliography}{99}
\small
\bibitem{AR} {\scshape L. Asimow, B. Roth}, The rigidity of graphs, {\it
  Trans. Amer. Math. Soc.}, 245, pp. 279-289, 1978.
  
  \bibitem{Con} {\scshape R. Connelly},
\newblock
Generic global rigidity,
\newblock {\itshape Discrete Comput. Geom.} 33:549-563 (2005).

\bibitem{CJW} {\sc R. Connelly, T. Jordán, W. Whiteley},  Generic global rigidity of body-bar frameworks, {\it J. Combin.
Theory Ser. B} 103 (6) (Nov 2013) 689–705.


\bibitem{CJT2} {\sc K. Clinch, B. Jackson, S. Tanigawa}, Abstract 3-rigidity and bivariate $C_1^2$-splines II: combinatorial
characterization, {\it Discrete Anal.} 3 (2022).

\bibitem{GJmgr} {\scshape D. Garamv\"olgyi and T. Jord\'an}, Minimally globally
rigid graphs, {\it European J. Combin.}, Vol. 108., 103626, 2023.

\bibitem{Gluck} {\scshape H. Gluck},
\newblock
Almost all simply connected closed surfaces are rigid, in
{\itshape Geometric topology}, L. C. Glasing and T. B. Rushing (eds.), 
pp. 225--239. 
Lecture Notes in Math., Vol. 438, 
Springer, Berlin, 1975.

\bibitem{GSS} {\scshape
J. Graver, B. Servatius, and H. Servatius},
\newblock {\itshape Combinatorial Rigidity},
\newblock AMS Graduate Studies in
Mathematics Vol. 2, 1993.



\bibitem{GHT} {\sc S. Gortler, A. Healy, and D. Thurston},
{Characterizing generic global rigidity},
{\it American Journal of Mathematics}, Volume 132, Number 4, August 2010,
pp. 897-939.


\bibitem{JJconnrig} {\scshape B. Jackson and T. Jord\'an,}
\newblock Connected rigidity matroids and unique
realizations of graphs, 
{\it J. Combin. Theory Ser. B}, Vol. 94, 1-29, 2005.



\bibitem{Jmol} {\sc T. Jordán,} Highly connected molecular graphs are rigid in three dimensions, {\it Information Processing Letters} 112 (8-9), 356-359.

\bibitem{JKT}{\sc T. Jord\'an, C. Kir\'aly, and S. Tanigawa}, Generic global rigidity of body-hinge frameworks,
{\it J. Combin. Theory}, Series B 117, 59-76, 2016.

\bibitem{JW} {\sc T. Jordán and W. Whiteley}, ‘Global rigidity’, in {\it Handbook of Discrete and Computational Geometry}, 3rd ed., J. E.Goodman, J. O’Rourke and Cs. D. Tóth (eds.), CRC Press, Boca Raton, 2018, 1661–1694.

  \bibitem{wgl}  {\scshape T. Jordán, S. Villányi}, Globally linked pairs of vertices in generic frameworks,
2023. arXiv:2307.04451 [math.CO]


\bibitem{KLM} {\sc M. Krivelevich, A. Lew, P. Michaeli}, Rigid partitions: from high connectivity to random graphs, 2023. 	arXiv:2311.14451 [math.CO]

  \bibitem{LY} {\scshape L. Lovász, Y. Yemini}, On generic rigidity in the plane, {\it SIAM J. Algebr. Discrete Methods} 3 (1) (1982) 91–98.
  
\bibitem{GaG} {\scshape L. Lovász}, {\it Graphs and Geometry}. Colloquium Publications 69. Amer. Math. Soc., 2019

\bibitem{Ng} {\sc V.-H. Nguyen}, On Abstract Rigidity Matroids. {\it SIAM Journal on Discrete Mathematics}, 24(2), 363–369 (2010).
  
 \bibitem{SW} {\sc B. Schulze and W. Whiteley}, ‘Rigidity and scene analysis’, in 
 {\it Handbook of Discrete and Computational Geometry}, 3rd ed., J. E. Goodman, J. O'Rourke, and Cs. D. T\'oth (eds.),
CRC Press, Boca Raton, FL, 2018.
  
  \bibitem{Tan} {\scshape S. Tanigawa}, Sufficient conditions for the global rigidity of graphs,
{\it J. Combin. Theory}, Ser. B., Vol. 113, Pages 123-140, July 2015.
  
\bibitem{WhitM} {\sc W. Whiteley.} Some matroids from discrete applied geometry.  In {\it Matroid Theory} (Seattle, WA, 1995),
Contemp. Math., 197, Amer. Math. Soc., Providence, RI, 1996: 171–311.  
  
  
\end{thebibliography}
\end{document}